\documentclass[mathpazo]{cicp}
\usepackage{amsmath}
\usepackage{amssymb,amsfonts,amsthm}
\usepackage{graphicx}
\newtheorem{exam}{Example}[section]

\newcommand{\bF}{{\bf F}}
\newcommand{\bu}{{\bf u}}

\newcommand{\vq}{{\bf q}}

\newcommand{\bx}{{\bf x}}

\newcommand{\by}{{\bf y}}

\newcommand{\bv}{{\bf v}}

\newcommand{\bxi}{\boldsymbol{\xi}}
\newcommand{\be}{{\bf e}}

\def\T{{\mathcal T}}
\def\E{{\mathcal E}}

\def\bg{{\bf g}}
\def\bn{{\bf n}}

\def\3bar{{|\hspace{-.02in}|\hspace{-.02in}|}}

\newcommand{\cA}{\mathcal{A}}

\usepackage{color}

\begin{document}
\title{Discrete maximum principle for the weak Galerkin method for anisotropic diffusion problems}

\author[W. Huang et.~al.]{Weizhang Huang\affil{1} and
  Yanqiu Wang\affil{2}}
\address{\affilnum{1}\ Department of Mathematics, 
the University of Kansas, Lawrence, KS 66045, U.S.A. \\
\affilnum{2}\ Department of Mathematics, 
Oklahoma State University, Stillwater, OK 74078, U.S.A.}
\emails{{\tt whuang@ku.edu} (W.~Huang), {\tt yqwang@math.okstate.edu} (Y.~Wang)}
 
\begin{abstract}
A weak Galerkin discretization of the boundary value problem of a general anisotropic diffusion problem
is studied for preservation of the maximum principle. 
It is shown that the direct application of the $M$-matrix theory to the stiffness matrix of
the weak Galerkin discretization leads to a strong mesh condition requiring all of the mesh dihedral angles
to be strictly acute (a constant-order away from 90 degrees).
To avoid this difficulty, a reduced system is considered and shown to satisfy the discrete maximum
principle under weaker mesh conditions.
The discrete maximum principle is then established for the full weak Galerkin approximation
using the relations between the degrees of freedom located on elements and edges.
Sufficient mesh conditions for both piecewise constant and general anisotropic diffusion matrices are obtained.
These conditions provide a guideline for practical mesh generation for preservation of the maximum principle.
Numerical examples are presented.
\end{abstract} 

\ams{65N30, 65N50}
\keywords{discrete maximum principle, weak Galerkin method, anisotropic diffusion.}

\maketitle

\section{Introduction}
We are concerned with the discrete maximum principle for a weak Galerkin discretization of the boundary
value problem (BVP) of a two-dimensional diffusion problem,
\begin{equation}
\label{eq:pde}
\begin{cases}
-\nabla\cdot(\cA\nabla u) = f, \qquad & \textrm{in }\Omega \\
u = g,\qquad &\textrm{on }\partial\Omega
\end{cases}
\end{equation}
where $\Omega\subset \mathbb{R}^2$ is a polygonal domain,
$f$ and $g$ are given functions, and $\cA$ is a symmetric and uniformly positive definite diffusion matrix
defined on $\Omega$.
The problem is isotropic when the diffusion matrix takes the form
$\cA = \alpha(\bx) I$ for some scalar function $\alpha(\bx)$
and anisotropic otherwise. In this work we are interested in the anisotropic situation.
It is known (e.g., see Evans \cite{Eva98}) that the classical solution of
(\ref{eq:pde}) satisfies the maximum principle,
\begin{equation}
f \le 0 \text{ in } \Omega \quad \Longrightarrow \quad \max_{\bx\in \Omega \cup \partial\Omega} u(\bx) = \max_{\bx\in\partial\Omega} u(\bx).
\label{eq:dmp}
\end{equation}

It is theoretically and practically important to investigate if a numerical approximation to (\ref{eq:pde})
preserves such a property. Indeed,
preservation of the maximum principle has attracted considerable attention from researchers; e.g., see
\cite{BKK08, Cia70, CR73, DDS04, Hua11, KL95, KK05, KKK07, KSS09, LH10, LiHu2012, LSS07, LS08,
LuHuQi2012, MD06, ShYu2011, Sto82, Sto86, SF73, Var66, WaZh11, XZ99, YuSh2008, ZZS2013}.
For example, it is shown by Ciarlet and Raviart \cite{CR73} and Brandts et al. \cite{BKK08}
that P1 conforming finite element (FE) solutions to isotropic diffusion problems
satisfy a discrete maximum principle (DMP)
if all of the mesh elements have nonobtuse dihedral angles. This nonobtuse angle condition
can be replaced in two dimensions
by a weaker condition (the Delaunay condition) \cite{SF73} requiring the sum of any pair of angles facing
a common interior edge to be less than or equal to $\pi$. For anisotropic diffusion problems,
Dr\v{a}g\v{a}nescu et al. \cite{DDS04} show that the nonobtuse angle condition fails to guarantee
the satisfaction of DMP for a  P1 conforming FE approximation.
Various techniques, including local matrix modification \cite{KSS09,LS08}, mesh optimization \cite{MD06},
and mesh adaptation \cite{LSS07}, have been proposed to reduce spurious oscillations.
More recently, it is shown by Li and Huang \cite{LH10} that P1 conforming FE solutions to anisotropic
diffusion problems can be guaranteed to satisfy DMP if the mesh satisfies an anisotropic nonobtuse angle condition
where mesh dihedral angles are measured in the metric specified by $\cA^{-1}$ instead of the Euclidean metric.
The result is extended to two dimensional problems \cite{Hua11}, problems with convection and reaction terms
\cite{LuHuQi2012}, and time dependent problems \cite{LiHu2012}. 
It is emphasized that while DMP has been well studied for conforming FE discretizations, 
it is less explored for nonconforming or mixed/mixed-hybrid FE methods.
Noticeably, DMP has been proven by Gu \cite{Gu} for a nonconforming FE discretization 
and by Hoteit et al. \cite{Hoteit} and Vohral\'{i}k and Wohlmuth \cite{Vohralik} for mixed/mixed-hybrid
FE discetizations. However, their results focus on isotropic diffusion problems.
Little is known about those discretizations for anisotropic diffusion problems.

The objective of this paper is to investigate the preservation of the maximum principle
by a weak Galerkin approximation of BVP (\ref{eq:pde}) with a general anisotropic diffusion matrix $\cA$.
The weak Galerkin method, recently introduced by Wang and Ye \cite{WangYe_PrepSINUM_2011},
is a FE method which uses a discontinuous FE space and approximates
derivatives with weakly defined ones on functions with discontinuity.
It can be easy to implement for meshes containing
arbitrary polygonal/polyhedral elements
\cite{WG-biharmonic, mwy-wg-stabilization, wy-mixed, WangYe_PrepSINUM_2011}.
The method has been successfully applied
to various model problems \cite{MuWangWangYe, mwy-wg-stabilization},
and its optimal order convergence has been established for second order elliptic equations
\cite{mwy-wg-stabilization, wy-mixed, WangYe_PrepSINUM_2011}.
On the other hand, the weak Galerkin method has not been studied in the aspect of preserving
the maximum principle. Such studies are useful in practice to avoid unphysical numerical solutions.
They are also beneficial in theory since they provide in-depth understandings of the newly developed
weak Galerkin method. It should be pointed out that such studies are not trivial.
A commonly used and effective tool in the study of preservation of the maximum
principle is the theory of $M$-matrices, matrices in the form of $s I - B$, where
$I$ is the identity matrix of some order $n > 0$, $s$ is a positive number, and $B$ is a nonnegative
matrix ($B(i,j)\ge 0$) with spectral radius less than $s$.
In principle, the theory can be directly applied to the current situation where
the weak Galerkin method defines the degrees of freedom separately on edges and inside elements.
(For the current work, we consider a simplest and lowest order weak Galerkin
method where solutions are approximated using functions that are piecewise constant on edges and inside elements.)
Unfortunately, this direct application leads to a strong mesh condition requiring all of
the mesh dihedral angles to be strictly acute ($\mathcal{O}(1)$ smaller than 90 degrees)
for DMP satisfaction (cf. Remark~\ref{rem:Mbb}).
To avoid this difficulty, 
we use a two-step procedure to study DMP preservation.
We first obtain a reduced system involving only the degrees of freedom on edges, 
and show that it satisfies DMP
if the mesh is sufficiently fine and meets an $\mathcal{O}(h^2)$-acute anisotropic angle condition
(which requires the angles to be only $\mathcal{O}(h^2)$ away from 90 degrees), where $h$ is the maximal
element diameter.
We then show that the weak Galerkin approximation to the solution on elements also satisfies DMP.
The mesh condition provides a guideline for practical generation of DMP-preserving meshes
for the weak Galerkin discretization of general anisotropic diffusion problems.

An outline of the paper is given as follows. A weak Galerkin discretization for BVP (\ref{eq:pde})
is given in \S\ref{sec:wg}. A weak gradient is defined and the properties of the discrete system are discussed in
\S\ref{sec:properties}. Preservation of the maximum principle is studied in \S\ref{sec:DMP},
followed by numerical examples in \S\ref{sec:numerical}. Finally, conclusions are drawn in \S\ref{sec:conclusion}.

\section{The weak Galerkin formulation}
\label{sec:wg}

In this section we describe a simplest and lowest order weak Galerkin discretization for BVP (\ref{eq:pde}).

We start with introducing some notation.
For any given polygonal domain $D$, we use the standard notation for 
Sobolev spaces $H^s(D)$ and $H_0^s(D)$ with $s\ge 0$.
The inner-product, norm, and semi-norms in $H^s(D)$ are denoted by
$(\cdot,\cdot)_{s,D}$, $\|\cdot\|_{s,D}$, and $|\cdot|_{r,D}$ ($0\le r\le s$), respectively.
When $s=0$, $H^0(D)$ coincides with $L^2(D)$, the space
of square integrable functions. In this case, the subscript
$s$ is suppressed from these notation. So is the subscript $D$ when
$D=\Omega$. For $s<0$, the space $H^s(D)$ is defined as the dual
of $H_0^{-s}(D)$. The above notation is extended in a straightforward manner
to vector-valued and matrix-valued functions and to an edge, a domain with a lower
dimension. Particularly, $\|\cdot\|_{s,e}$ and $\|\cdot\|_e$ denote the norm in
$H^s(e)$ and $L^2(e)$, respectively.
Functions or quantities  on the boundary of $\Omega$ or boundary edges of a mesh
will be denoted by $(\cdot)^{\partial}$.

The variational form of BVP (\ref{eq:pde}) reads as:
Given $f\in H^{-1}(\Omega)$ and $g\in H^{\frac{1}{2}}(\partial\Omega)$, find $u\in H^1(\Omega)$ such that
$u=g$ on $\partial\Omega$ and
\begin{equation}
\label{eq:weakformulation}
(\cA \nabla u,\, \nabla v) = \langle f,\, v \rangle, \qquad \forall v\in H_0^1(\Omega)
\end{equation}
where $\langle\cdot,\cdot\rangle$ denotes the duality form on $\Omega$.

To define the weak Galerkin approximation of (\ref{eq:weakformulation}), let ${\cal T}_h$ be a given triangular mesh
on $\Omega$.
For each triangle $K\in {\cal T}_h$, denote the interior and
boundary of $K$ by $K_0$ and $\partial K$, respectively.
Also, denote the diameter (i.e., the length of the longest edge) of $K$ by $h_K$
and let $h=\max_{K\in\mathcal{T}_h}h_K$. The boundary $\partial K$ of $K$ consists of three edges.
Denote by $\E_h$ the set of all edges in ${\cal T}_h$. For simplicity, hereafter we use ``$\lesssim$''
to denote ``less than or equal to up to a general constant
independent of the mesh size or functions appearing in the inequality''.
We denote
by $P_0(K_0)$ the set of constant polynomials on the interior $K_0$ of triangle $K$.
Likewise, $P_{0}(e)$ is the set of constant polynomials on $e \in \E_h$.
Following \cite{WangYe_PrepSINUM_2011}, we define a weak discrete space on mesh $\T_h$ by
$$
V_{h} =  \{v:\:  v|_{K_0}\in P_0(K_0)\textrm{ for } K\in \T_h;
\ v|_e\in P_0(e)\textrm{ for } e\in \E_h\}.
$$
Note that $V_h$ does not require the continuity
of its functions across interior edges. A function in $V_h$ is
characterized by its values ($v_0$) on the interior of the elements and those ($v_b$)
on edges. It is often convenient to represent it
with two components, $v =\{v_0, v_b\}$.  $V_h$ is one of the lowest order weak Galerkin
space defined on triangular meshes \cite{WangYe_PrepSINUM_2011}.
To cope with the boundary conditions, for a given piecewise constant function $g_h$ defined on $\E_h\cap\partial\Omega$
we denote
$$
V_{h}^{g_h} = \{v:\: v\in V_h \textrm{ and } v_b|_e = g_h|_e \textrm{ for } e\in \E_h\cap\partial\Omega\} .
$$
When $g_h\equiv 0$, $V_{h}^{g_h}$ becomes $V_{h}^{0}$.

The weak Galerkin method seeks an approximation $u_h\in V_h^{g_h}$ to the solution of (\ref{eq:weakformulation}),
where $g_h$ is an approximation to the actual boundary data $g$. Notice that $V_h \not\subset H^1(\Omega)$ and
the gradient operator is not defined for functions in $V_h$.
For the moment we assume that a weak gradient, denoted by $\nabla_w$, is defined for functions in $V_h$.
(A definition will be given in the next section.)
Then, a weak Galerkin FE approximation is defined as $u_h = \{u_0,\, u_b\}\in V_h^{g_h}$ such that
\begin{equation}
\label{eq:wg}
  (\cA \nabla_w u_h,\, \nabla_w v_h) = \langle f,\, v_0\rangle,  \qquad \forall \; v_h = \{v_0,\, v_b\}\in V_h^0.
\end{equation}
The well-posedness and error estimates of the weak Galerkin formulation (\ref{eq:wg}) have been discussed
in \cite{WG-biharmonic,WangYe_PrepSINUM_2011}.

Equation (\ref{eq:wg}) can be cast in a matrix form.
Denote the numbers of the triangles, interior edges, and boundary edges in $\T_h$
by $N_0$,  $N_b$, and $N_b^{\partial}$, respectively.
Let $\phi_{0,i}$  ($i=1,\ldots,N_0$) be the basis function in $V_h$ associated with the $i^{\text{th}}$ element
such that  its value is $1$ on the triangle and $0$ on other elements or all edges. 
Similarly, let $\phi_{b,i}$ ($i=1,\ldots,N_b$) and $\phi_{b,i}^{\partial}$ ($i=1,\ldots,N_b^{\partial}$)
be the basis functions in $V_h$ associated with $i^{\text{th}}$ interior and boundary edges, respectively.
Then $u_h=\{u_0,u_b\}\in V_h$ can be expressed as
\begin{equation}
u_h = \sum_{i=1}^{N_0} u_{0,i} \phi_{0,i} + \sum_{i=1}^{N_b} u_{b,i} \phi_{b,i}
+ \sum_{i=1}^{N_b^{\partial}} u_{b,i}^{\partial} \phi_{b,i}^{\partial}.
\label{eq:uh}
\end{equation}
For convenience, we define the vector representation of $u_h$ as
$$
\bu = \begin{bmatrix} \bu_0 \\[1mm] \bu_b
\\[1mm] \bu_b^{\partial} \end{bmatrix},
\quad \text{ with }\quad 
\bu_0 = \begin{bmatrix}u_{0,1} \\[1mm] u_{0,2} \\ \vdots\\[1mm] u_{0,N_0}\end{bmatrix},\qquad
\bu_b = \begin{bmatrix} u_{b,1} \\[1mm] u_{b,2} \\ \vdots \\[1mm] u_{b,N_b} \end{bmatrix},\qquad
\bu_b^{\partial} = \begin{bmatrix} u_{b,1}^{\partial} \\[1mm] u_{b,2}^{\partial} \\ \vdots \\[1mm]
    u_{b,N_b^{\partial}}^{\partial} \end{bmatrix} .
$$
Inserting (\ref{eq:uh}) into (\ref{eq:wg}) and taking $v_h$ to be the basis functions, we get
\begin{equation}
\label{eq:matrixform}
M \bu = \bF,
\end{equation}
where
\[
M = \begin{bmatrix} M_{0,0} & M_{0,b} & M_{0,b}^{\partial} \\[1mm] M_{b,0} & M_{b,b} & M_{b,b}^{\partial}
        \\[1mm] 0 & 0 & I\end{bmatrix} ,
\qquad \bF = \begin{bmatrix}\bF_0 \\[1mm] 0 \\[1mm] \bg_h \end{bmatrix},
\]
$$
\begin{aligned}
M_{0,0} &= \begin{bmatrix}(\cA\nabla_w \phi_{0,j},\, \nabla_w \phi_{0,i})\end{bmatrix} \in \mathbb{R}^{N_0\times N_0}, \qquad &
M_{b,0} &= \begin{bmatrix}(\cA\nabla_w \phi_{0,j},\, \nabla_w \phi_{b,i})\end{bmatrix} \in \mathbb{R}^{N_b\times N_0}, \\
M_{0,b} &= \begin{bmatrix}(\cA\nabla_w \phi_{b,j},\, \nabla_w \phi_{0,i})\end{bmatrix} \in \mathbb{R}^{N_0\times N_b}, \qquad &
M_{b,b} &= \begin{bmatrix}(\cA\nabla_w \phi_{b,j},\, \nabla_w \phi_{b,i})\end{bmatrix} \in \mathbb{R}^{N_b\times N_b}, \\
M_{0,b}^{\partial} &= \begin{bmatrix}(\cA\nabla_w \phi_{b,j}^{\partial},\, \nabla_w \phi_{0,i})\end{bmatrix}
  \in \mathbb{R}^{N_0\times N_b^{\partial}}, \qquad &
M_{b,b}^{\partial} &= \begin{bmatrix}(\cA\nabla_w \phi_{b,j}^{\partial},\, \nabla_w \phi_{b,i})\end{bmatrix}
   \in \mathbb{R}^{N_b\times N_b^{\partial}}, \\
\bF_0 &= \begin{bmatrix}\langle f,\, \phi_{0,i}\rangle\end{bmatrix}  \in \mathbb{R}^{N_0},
\end{aligned}
$$
and $\bg_h\in \mathbb{R}^{N_b^{\partial}}$ is the vector representation of the discrete boundary data $g_h$.

We are interested in the preservation of the maximum principle by the weak Galerkin approximation defined above.
A commonly used and effective tool for this type of study is the theory of $M$-matrices. 
In principle, the theory can be directly applied to the system (\ref{eq:matrixform}). However,
as will be seen in Remark~\ref{rem:Mbb}, the mesh condition ensuring all of the off-diagonal entries of
$M_{b,b}$ to be nonpositive is generally stronger than that obtained with a reduced system.
Such reduced system is obtained by eliminating the variable $\bu_0$ in (\ref{eq:matrixform}), i.e., 
\begin{equation}
\label{eq:SchurComplement-1}
\begin{bmatrix}M_{b,b} - M_{b,0}M_{0,0}^{-1}M_{0,b} & M_{b,b}^{\partial} - M_{b,0}M_{0,0}^{-1} M_{0,b}^{\partial} \\[1mm]
0 & I \end{bmatrix}
\begin{bmatrix}\bu_b  \\[1mm] \bu_b^{\partial} \end{bmatrix} =
\begin{bmatrix}-M_{b,0}M_{0,0}^{-1} \bF_0 \\[1mm] \bg_h\end{bmatrix} .
\end{equation}
In the next section, we shall show that the stiffness matrix of (\ref{eq:SchurComplement-1})
can be an $M$-matrix under suitable, weaker mesh conditions.
Notice that the stiffness matrix involves the inverse of $M_{0,0}$. It is easy to see that $M_{0,0}$ is diagonal
since the support of any basis function $\phi_{0, i}$ does not overlap with the support
of other basis functions $\phi_{0,j}$ with $j \neq i$. Thus, the involvement of the inverse of $M_{0,0}$
will not complicate the analysis of the system. More properties of (\ref{eq:SchurComplement-1})
are discussed in the next section.

For convenience, we rewrite (\ref{eq:SchurComplement-1}) as
\begin{equation}
\label{eq:SchurComplement}
\begin{bmatrix}A & A^{\partial}\\ 0& I\end{bmatrix}
\begin{bmatrix}\bu_b  \\[1mm] \bu_b^{\partial} \end{bmatrix} =
\begin{bmatrix}-M_{b,0}M_{0,0}^{-1} \bF_0 \\[1mm] \bg_h\end{bmatrix} ,
\end{equation}
where
\[
\bar{A} = \begin{bmatrix}A & A^{\partial}\\ 0& I\end{bmatrix},\quad
A = M_{b,b} - M_{b,0}M_{0,0}^{-1}M_{0,b},\quad
A^{\partial} = M_{b,b}^{\partial} - M_{b,0}M_{0,0}^{-1} M_{0,b}^{\partial} .
\]

\section{Weak gradient and properties of the discrete system}
\label{sec:properties}

In this section we present a definition of the weak gradient operator and study the properties of
the discrete system (\ref{eq:SchurComplement}).

We use a definition of the weak gradient operator proposed in \cite{WangYe_PrepSINUM_2011}.
For any element $K\in \T_h$, we denote the space of the lowest order Raviart-Thomas element \cite{RT77} on $K$
by $RT_0(K)$, i.e., 
$$
RT_0(K) = (P_0(K))^2 + \bx P_0(K).
$$
The degrees of freedom of $RT_0(K)$ consist of $0^{\text{th}}$ order moments of normal
components on each edge of $K$. The functions in $RT_0(K)$ can be written in the
form of $c(\bx - \bx_0)$ for some constant $c$ and vector $\bx_0$. 
Define
$$
\Sigma_h = \{\vq\in (L^2(\Omega))^2:\: \vq|_K \in RT_0(K)\textrm{ for } K\in\T_h\}.
$$
A discrete weak gradient
\cite{WangYe_PrepSINUM_2011} of $v_h=\{v_0,v_b\}\in V_h$ is defined
as $\nabla_w v_h \in \Sigma_h$ such that on each $K\in\T_h$,
\begin{equation}
\label{discrete-weak-gradient-new}
(\nabla_w v_h,\, \vq)_K = -(v_0,\, \nabla\cdot \vq)_K + \langle v_b,\, \vq\cdot\bn\rangle_{\partial K},
\quad \forall \; \vq\in RT_{0}(K)
\end{equation}
where $\bn$ is the unit outward normal on $\partial K$.
Such a discrete weak gradient is well defined on $V_h$. Moreover,
$\nabla_w \phi_{0,i}$, $\nabla_w \phi_{b,i}$, and $\nabla_w \phi_{b,i}^{\partial}$
can be found explicitly. To this end, we denote the centroid and area of $K\in \T_h$ by
$\bx_K$ and $|K|$, respectively, and the length of $e\in \E_h$ by $|e|$.

\begin{lemma}
\label{lem:basis0}
Letting $K$ be the $i^{\text{th}}$ triangle in $\T_h$, then
\[
\nabla_w\phi_{0,i}|_K = -C_K (\bx - \bx_K),
\]
where
\begin{equation}
C_K = \frac{2|K|}{\|\bx-\bx_K\|_K^2}.
\label{eq:ck}
\end{equation}
\end{lemma}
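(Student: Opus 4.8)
The plan is to compute the weak gradient $\nabla_w \phi_{0,i}$ directly from its defining relation \eqref{discrete-weak-gradient-new}. Since $\phi_{0,i}$ is the basis function whose interior value is $1$ on $K$ and whose edge values all vanish, the component $v_b$ in the defining equation is identically zero on $\partial K$, so the boundary term $\langle v_b,\, \vq\cdot\bn\rangle_{\partial K}$ drops out. This leaves me with the identity $(\nabla_w \phi_{0,i},\, \vq)_K = -(1,\, \nabla\cdot \vq)_K$ for all $\vq\in RT_0(K)$. The task is then to identify the unique element of $RT_0(K)$ satisfying this identity.

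First I would exploit the explicit structure of $RT_0(K)$: every $\vq\in RT_0(K)$ has the form $\vq = c(\bx - \bx_0)$ for a scalar $c$ and a vector $\bx_0$, so $\nabla\cdot \vq = 2c$ is constant on $K$. Writing the ansatz $\nabla_w \phi_{0,i}|_K = d\,(\bx - \bx_K)$ for an unknown scalar $d$ (the vector part must be $\bx-\bx_K$ by the form of $RT_0(K)$; the centroid $\bx_K$ is the natural choice since $\int_K (\bx - \bx_K)\, d\bx = \vzero$), I would substitute both into the reduced identity. The right-hand side becomes $-(1,\, 2c)_K = -2c\,|K|$. For the left-hand side, I would use the two-parameter nature of $\vq$: it suffices to test against $\vq = c(\bx - \bx_K)$, giving $(d(\bx-\bx_K),\, c(\bx-\bx_K))_K = d\,c\,\|\bx-\bx_K\|_K^2$. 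Matching the two sides yields $d\,c\,\|\bx-\bx_K\|_K^2 = -2c\,|K|$, and cancelling the arbitrary constant $c$ gives $d = -2|K|/\|\bx-\bx_K\|_K^2 = -C_K$, which is exactly the claimed formula.

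The one point that needs care, and which I expect to be the main (though modest) obstacle, is justifying that testing against the single one-parameter family $\vq = c(\bx-\bx_K)$ is enough to pin down $\nabla_w\phi_{0,i}$ uniquely, even though $RT_0(K)$ is three-dimensional. The resolution is that the right-hand side functional $\vq \mapsto -(1,\nabla\cdot\vq)_K$ depends only on $\nabla\cdot\vq = 2c$, so it is blind to the vector shift $\bx_0$; consequently the defining relation determines $\nabla_w\phi_{0,i}$ only up to the $L^2(K)$-orthogonal complement of the divergence-carrying directions, and I must verify that the divergence-free part of $\nabla_w\phi_{0,i}$ vanishes. This is where the centroid choice is essential: because $\int_K(\bx-\bx_K)\,d\bx = \vzero$, any $\vq$ of the pure-shift form $c\,\bz$ (constant vector field, which has zero divergence and lies in $RT_0(K)$) is $L^2(K)$-orthogonal to $\bx - \bx_K$, so testing against such $\vq$ gives $0 = 0$ on both sides and imposes no further constraint, confirming consistency and uniqueness of the stated solution. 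I would close by noting that $\nabla_w\phi_{0,i}$ vanishes identically on all other elements, since $\phi_{0,i}$ has zero interior value and zero edge values there, so the element-local formula is the complete answer.
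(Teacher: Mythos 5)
Your proposal is correct and follows essentially the same route as the paper: both arguments test the defining relation against the constant vector fields and against $\bx-\bx_K$, the only difference being that the paper first derives the zero-mean property (hence the form $c(\bx-\bx_K)$) and then fixes the constant, whereas you posit the ansatz, fix the constant, and then verify consistency against the constant fields. Your closing verification that the candidate satisfies the relation on a spanning set of $RT_0(K)$, combined with well-posedness of the weak gradient, makes the argument complete.
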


\begin{proof}
Taking $v_h = \phi_{0,i}$ and $\vq = \begin{bmatrix}1\\0\end{bmatrix}$ and 
$\begin{bmatrix}0\\1\end{bmatrix}$ in (\ref{discrete-weak-gradient-new}), we have
$$
\begin{aligned}
\left(\nabla_w \phi_{0,i},\, \begin{bmatrix}1\\0\end{bmatrix}\right)_K &=
   -\left(\phi_{0,i},\, \nabla\cdot \begin{bmatrix}1\\0\end{bmatrix}\right)_K = 0, \\[2mm]
\left(\nabla_w \phi_{0,i},\, \begin{bmatrix}0\\1\end{bmatrix}\right)_K &=
   -\left(\phi_{0,i},\, \nabla\cdot \begin{bmatrix}0\\1\end{bmatrix}\right)_K = 0,
\end{aligned}
$$
which implies $\int_K \nabla_w \phi_{0,i}\, d\bx = \mathbf{0}$.
Since both components of $\nabla_w \phi_{0,i}$ are linear polynomials, we get
$\nabla_w \phi_{0,i} = c(\bx - \bx_K)$ for some constant $c$. To determine $c$, 
we take $\vq = \bx - \bx_K$ in (\ref{discrete-weak-gradient-new}) and have
$$
  \left(\nabla_w \phi_{0,i},\, \bx - \bx_K\right)_K = -\left(\phi_{0,i},\, \nabla\cdot (\bx - \bx_K)\right)_K
      = -2|K|.
$$
Combining this with $\nabla_w \phi_{0,i} = c(\bx - \bx_K)$, we obtain $c = C_K$.
\end{proof}

\begin{remark}
From the definition of $\bx_K$, one can easily see that
\begin{equation}
\label{eq:basis0}
\left(\bx - \bx_K,\, \begin{bmatrix}a\\b\end{bmatrix}\right)_K = 0,
\quad \forall \; \begin{bmatrix}a\\b\end{bmatrix} \in \mathbb{R}^2 .
\end{equation}
This can also be verified by direct calculation.
Identity (\ref{eq:basis0}) will be used frequently in the following analysis.
\qed
\end{remark}

\begin{lemma}
\label{lem:basisb}
Assume that the $i^{\text{th}}$ interior edge $e_i$ is on $\partial K$. Then,
\[
\nabla_w \phi_{b,i}|_K = \frac{C_K}{3} (\bx - \bx_K) + \frac{|e_i|}{|K|}\bn_{i,K},
\]
where $\bn_{i,K}$ is the unit outward normal on $e_i$ with respect to $K$ and $C_K$ is given in (\ref{eq:ck}).
The formula also applies to the boundary edge $e_i^{\partial}$, viz.,
\[
\nabla_w \phi_{b,i}^{\partial}|_K = \frac{C_K}{3} (\bx - \bx_K) + \frac{|e_i^{\partial}|}{|K|}\bn_{i,K}^{\partial} .
\]
\end{lemma}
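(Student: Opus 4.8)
The plan is to mirror the proof of Lemma~\ref{lem:basis0}, exploiting the fact that $\phi_{b,i}$ now has vanishing interior component ($v_0=0$) but nonzero boundary component ($v_b=1$ on $e_i$ and $v_b=0$ on the remaining edges of $K$). Since $\nabla_w\phi_{b,i}|_K\in RT_0(K)$, I would parametrize it as
\[
\nabla_w\phi_{b,i}|_K = c\,(\bx-\bx_K) + \bd
\]
for an unknown scalar $c$ and constant vector $\bd$, which ranges over all of $RT_0(K)$ as $c$ and $\bd$ vary. The strategy is then to pin down $\bd$ using constant test functions and $c$ using the linear test function $\bx-\bx_K$, exactly as in Lemma~\ref{lem:basis0}.

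First I would determine $\bd$. Taking $\vq=\begin{bmatrix}1\\0\end{bmatrix}$ and $\begin{bmatrix}0\\1\end{bmatrix}$ in (\ref{discrete-weak-gradient-new}), the interior term drops out because $v_0=0$, and the boundary term collapses to an integral over $e_i$ only (since $v_b=1$ there and $0$ elsewhere), giving $\langle v_b,\vq\cdot\bn\rangle_{\partial K}=|e_i|\,(\vq\cdot\bn_{i,K})$ because $\bn_{i,K}$ is constant on the straight edge $e_i$. Hence $\int_K \nabla_w\phi_{b,i}\,d\bx=|e_i|\,\bn_{i,K}$. On the other hand, integrating the parametrized form and using identity (\ref{eq:basis0}) to kill the $c\,(\bx-\bx_K)$ contribution yields $\int_K \nabla_w\phi_{b,i}\,d\bx=|K|\,\bd$. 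Comparing the two gives $\bd=\tfrac{|e_i|}{|K|}\bn_{i,K}$, which is the second term in the claimed formula.

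Next I would determine $c$ by taking $\vq=\bx-\bx_K$ in (\ref{discrete-weak-gradient-new}). The left-hand side becomes $(\nabla_w\phi_{b,i},\bx-\bx_K)_K = c\,\|\bx-\bx_K\|_K^2$, since the $\bd$ contribution again vanishes by (\ref{eq:basis0}). The right-hand side reduces to the boundary integral $\int_{e_i}(\bx-\bx_K)\cdot\bn_{i,K}\,ds$. The main step, and the place where the factor $\tfrac13$ enters, is evaluating this edge integral: for any point on $e_i$ the quantity $(\bx-\bx_K)\cdot\bn_{i,K}$ equals the (constant) perpendicular distance $d_i$ from the centroid to the line carrying $e_i$. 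Using that the centroid lies at one third of the height from the vertex opposite $e_i$, together with $|K|=\tfrac12|e_i|\,h_i$, gives $d_i=\tfrac{1}{3}h_i=\tfrac{2|K|}{3|e_i|}$, so the integral equals $\tfrac{2|K|}{3}$. Equating with the left-hand side gives $c=\tfrac{2|K|}{3\|\bx-\bx_K\|_K^2}=\tfrac{C_K}{3}$ by (\ref{eq:ck}), completing the interior-edge case.

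I expect the centroid-to-edge distance computation to be the only substantive point; everything else is a direct transcription of the constant-test-function and linear-test-function arguments from Lemma~\ref{lem:basis0}. Finally, the boundary-edge assertion requires no new argument: replacing $e_i$, $\bn_{i,K}$ by $e_i^{\partial}$, $\bn_{i,K}^{\partial}$ leaves every step verbatim, since $\phi_{b,i}^{\partial}$ has the same structure ($v_0=0$, $v_b=1$ on the relevant edge), so the identical derivation yields the stated expression for $\nabla_w\phi_{b,i}^{\partial}|_K$.
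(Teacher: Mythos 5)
Your proposal is correct and follows essentially the same route as the paper: constant test functions pin down the mean value $|e_i|\bn_{i,K}/|K|$, and the test function $\bx-\bx_K$ together with the centroid-to-edge distance $h_i/3$ yields $c=C_K/3$. One small wording slip: the centroid lies at one third of the height measured \emph{from the edge} $e_i$ (two thirds of the median from the opposite vertex), not ``one third of the height from the vertex''; your formula $d_i=\tfrac13 h_i=\tfrac{2|K|}{3|e_i|}$ is nevertheless the correct one and the argument goes through.
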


\begin{proof}
We only consider $\nabla_w \phi_{b,i}|_K$ since the proof for $\nabla_w \phi_{b,i}^{\partial}|_K$ is exactly the same.
Taking $v_h = \phi_{b,i}$ and $\vq = \begin{bmatrix}1\\0\end{bmatrix}$ and 
$\begin{bmatrix}0\\1\end{bmatrix}$ in (\ref{discrete-weak-gradient-new}), we have
$$
\begin{aligned}
\left(\nabla_w \phi_{b,i},\, \begin{bmatrix}1\\0\end{bmatrix}\right)_K &=
   \left\langle\phi_{b,i},\, \begin{bmatrix}1\\0\end{bmatrix}\cdot \bn_{i,K} \right\rangle_{e_i}
   = |e_i| \; \begin{bmatrix}1\\0\end{bmatrix}\cdot \bn_{i,K}, \\[2mm]
\left(\nabla_w \phi_{b,i},\, \begin{bmatrix}0\\1\end{bmatrix}\right)_K &=
   \left\langle\phi_{b,i},\, \begin{bmatrix}0\\1\end{bmatrix}\cdot \bn_{i,K} \right\rangle_{e_i}
   = |e_i| \; \begin{bmatrix}0\\1\end{bmatrix}\cdot \bn_{i,K},
\end{aligned}
$$
which implies $\int_K \nabla_w \phi_{b,i}\, d\bx = |e_i|\bn_{i,K}$.
Again, since both components of $\nabla_w \phi_{b,i}$ are linear polynomials, we get
$$
\nabla_w \phi_{b,i}|_K = c (\bx - \bx_K) + \frac{|e_i|}{|K|}\bn_{i,K}.
$$
To determine $c$, we take $\vq = \bx - \bx_K$ in (\ref{discrete-weak-gradient-new}) and get
$$
\left( c (\bx - \bx_K) + \frac{|e_i|}{|K|}\bn_{i,K},\, \bx - \bx_K  \right)_K
   = \left\langle \phi_{b,i},\, (\bx - \bx_K) \cdot\bn_{i,K} \right\rangle_{e_i}.
$$
From (\ref{eq:basis0}), the left-hand side of the above equation becomes $c\|\bx-\bx_K\|_K^2$.
For the right-hand side, we observe that
for any $\bx\in e_i$, $(\bx - \bx_K) \cdot\bn_{i,K}$ is equal to one third
of the height of triangle $K$ with $e_i$ as the base. This implies that the right-hand side
is equal to $\frac{2}{3}|K|$. Combining these results, we obtain $c= C_K/3$ and therefore
the expression for $\nabla_w \phi_{b,i}|_K$.
\end{proof}

\begin{remark}
\label{rem:basis0b}
From the definitions of $\phi_{0,i}$ and $\phi_{b,i}$ and Lemmas~\ref{lem:basis0} and \ref{lem:basisb},
we see that the support of $\nabla_w \phi_{0,i}$ consists of the $i^{\text{th}}$ element and
that of $\nabla_w \phi_{b,i}$ consists of the elements sharing $e_i$ as a common edge.
\qed
\end{remark}

Having obtained $\nabla_w \phi_{0,i}$, $\nabla_w \phi_{b,i}$, and $\nabla_w \phi_{b,i}^{\partial}$, we now are
ready to find the matrices $M_{0,0}$, $M_{0,b}$, $M_{0,b}^\partial$, $M_{b,0}$, $M_{b,b}$, and $M_{b,b}^\partial$.

\begin{lemma}
\label{lem:M00}
$M_{0,0}$ is a diagonal matrix with diagonal entries
$$
M_{0,0}(i,i) = C_K^2 \|\bx - \bx_K\|_{\cA,K}^2,
$$
where $K$ is the $i^{\text{th}}$ triangle and 
\begin{equation}
\|\bx - \bx_K\|_{\cA,K} = (\cA(\bx - \bx_K),\, \bx - \bx_K)_K^{\frac 1 2}.
\label{AX-1}
\end{equation}
\end{lemma}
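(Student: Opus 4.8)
The plan is to exploit the two structural facts already in hand: the localization of the support of $\nabla_w \phi_{0,i}$, and the closed-form expression from Lemma~\ref{lem:basis0}. First I would establish that $M_{0,0}$ is diagonal. By Remark~\ref{rem:basis0b}, the support of $\nabla_w \phi_{0,i}$ is contained in the single $i^{\text{th}}$ element. Hence for $i \neq j$ the supports of $\nabla_w \phi_{0,i}$ and $\nabla_w \phi_{0,j}$ are disjoint up to a set of measure zero, so the integrand $\cA\nabla_w \phi_{0,j}\cdot \nabla_w \phi_{0,i}$ vanishes almost everywhere on $\Omega$. This forces the off-diagonal entry $M_{0,0}(i,j) = (\cA\nabla_w \phi_{0,j},\, \nabla_w \phi_{0,i}) = 0$, giving the diagonal structure.

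For the diagonal entries, I would substitute the explicit formula $\nabla_w \phi_{0,i}|_K = -C_K(\bx - \bx_K)$ from Lemma~\ref{lem:basis0}, valid on the $i^{\text{th}}$ triangle $K$ and zero elsewhere. Because the $L^2(\Omega)$ inner product localizes to $K$, I compute
$$
M_{0,0}(i,i) = \left(\cA\, (-C_K(\bx - \bx_K)),\ -C_K(\bx - \bx_K)\right)_K
= C_K^2 \left(\cA(\bx - \bx_K),\ \bx - \bx_K\right)_K,
$$
and the factor on the right is precisely $\|\bx - \bx_K\|_{\cA,K}^2$ by definition (\ref{AX-1}), yielding the claimed value $C_K^2 \|\bx - \bx_K\|_{\cA,K}^2$.

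There is no genuine obstacle here: the statement is an immediate corollary of the support property and the formula for $\nabla_w \phi_{0,i}$. The only point demanding a word of care is the reduction of the inner product over $\Omega$ to an integral over the single element $K$, which is justified by the localized support, after which pulling out the scalar $C_K^2$ and invoking (\ref{AX-1}) completes the argument.
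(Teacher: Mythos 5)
Your argument is correct and is essentially the paper's own proof: diagonality from the non-overlapping supports of the element basis functions, followed by substitution of the explicit formula $\nabla_w\phi_{0,i}|_K=-C_K(\bx-\bx_K)$ from Lemma~\ref{lem:basis0} and the definition (\ref{AX-1}). No further comment is needed.
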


\begin{proof}
$M_{0,0}$ is diagonal since the support of the basis function $\phi_{0, i}$ does not overlap with the support
of other basis functions $\phi_{0,j}$ with $j \neq i$. Moreover, from Lemma \ref{lem:basis0},
\begin{align*}
M_{0,0}(i,i) &= (\cA \nabla_w \phi_{0,i},\, \nabla_w \phi_{0,i})_K \\
 &= \left( -\cA C_K(\bx - \bx_K),\, -C_K(\bx - \bx_K) \right)_K \\
 &= C_K^2 \|\bx - \bx_K\|_{\cA,K}^2.
\end{align*}
\end{proof}

\begin{lemma}
\label{lem:Mb0}
If the $i^{\text{th}}$ interior edge $e_i$ is an edge of the $j^{\text{th}}$ triangle $K\in \T_h$, then
$$
M_{b,0}(i,j) = M_{0,b}(j,i) = -\frac{1}{3}C_K^2 \|\bx - \bx_K\|_{\cA,K}^2
- C_K \frac{|e_i|}{|K|} \left(\cA (\bx - \bx_K),\, \bn_{i,K} \right)_K.
$$
Otherwise, $M_{b,0}(i,j) = M_{0,b}(j,i) = 0$.
Similarly, if the $i^{\text{th}}$ boundary edge $e_i^{\partial}$  is an edge of the $j^{\text{th}}$ triangle $K\in \T_h$, then
$$
M_{0,b}^{\partial}(j,i) = -\frac{1}{3}C_K^2 \|\bx - \bx_K\|_{\cA,K}^2
- C_K \frac{|e_i^{\partial}|}{|K|} \left(\cA(\bx - \bx_K),\, \bn_{i,K}^{\partial} \right)_K.
$$
Otherwise, $M_{0,b}^{\partial}(j,i) = 0$.
\end{lemma}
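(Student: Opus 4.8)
The plan is to compute the entries directly from the explicit formulas for the weak gradients obtained in Lemmas~\ref{lem:basis0} and \ref{lem:basisb}, combined with the support information recorded in Remark~\ref{rem:basis0b}. The statement is ultimately a routine bilinear expansion, so the real content is bookkeeping: correctly identifying the common support of the two basis gradients so that the integral reduces to a single element, and tracking the sign coming from $\nabla_w\phi_{0,j}$.

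First I would settle the sparsity pattern and the symmetry claim. By definition $M_{b,0}(i,j) = (\cA\nabla_w\phi_{0,j},\,\nabla_w\phi_{b,i})$ and $M_{0,b}(j,i) = (\cA\nabla_w\phi_{b,i},\,\nabla_w\phi_{0,j})$; these coincide because $\cA$ is symmetric. Remark~\ref{rem:basis0b} states that $\nabla_w\phi_{0,j}$ is supported only on the $j^{\text{th}}$ triangle $K$, so the inner product vanishes unless $e_i$ is an edge of $K$. This disposes of the ``otherwise $=0$'' cases at once, and shows that in the nonzero case the integral is taken over $K$ alone, even though $\nabla_w\phi_{b,i}$ also lives on the neighboring element.

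Assuming then that $e_i$ is an edge of $K$, I substitute $\nabla_w\phi_{0,j}|_K = -C_K(\bx-\bx_K)$ and $\nabla_w\phi_{b,i}|_K = \tfrac{C_K}{3}(\bx-\bx_K) + \tfrac{|e_i|}{|K|}\bn_{i,K}$ into the inner product and integrate over $K$. The integrand splits into one term proportional to $(\cA(\bx-\bx_K),\,\bx-\bx_K)_K = \|\bx-\bx_K\|_{\cA,K}^2$, carrying the constant $-C_K^2/3$, and one term proportional to $(\cA(\bx-\bx_K),\,\bn_{i,K})_K$, carrying the constant $-C_K|e_i|/|K|$. Collecting these reproduces the claimed expression for $M_{b,0}(i,j)=M_{0,b}(j,i)$. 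The boundary-edge formula for $M_{0,b}^{\partial}(j,i)$ follows from the identical computation, replacing $\phi_{b,i}$, $|e_i|$, and $\bn_{i,K}$ by $\phi_{b,i}^{\partial}$, $|e_i^{\partial}|$, and $\bn_{i,K}^{\partial}$. I do not expect any genuine obstacle; the only point requiring attention is restricting the integration to $K$ via the support argument.
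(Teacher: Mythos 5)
Your proposal is correct and follows essentially the same route as the paper's proof: use the support information from Remark~\ref{rem:basis0b} to dispose of the zero entries and restrict the integral to the single triangle $K$, then substitute the explicit weak gradients from Lemmas~\ref{lem:basis0} and \ref{lem:basisb} and expand the bilinear form. The paper likewise treats the boundary-edge case as an identical computation, so nothing is missing.
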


\begin{proof}
From Remark~\ref{rem:basis0b}, we know $M_{b,0}(i,j) = M_{0,b}(j,i) = 0$ when
$e_i$ is not an edge of the $j$th triangle $K$. On the other hand,
when $e_i$ is an edge of the $j$th triangle $K$, by Lemmas \ref{lem:basis0} and \ref{lem:basisb} we have
$$
\begin{aligned}
M_{b,0}(i,j) =M_{0,b}(j,i) &= (\cA \nabla_w \phi_{0,j},\, \nabla_w \phi_{b,i})_K \\
  &= \left(-\cA C_K(\bx - \bx_K),\, \frac{1}{3}C_K(\bx - \bx_K)+ \frac{|e_i|}{|K|}\bn_{i,K} \right)_K \\
  &= -\frac{1}{3} C_K^2 \|\bx - \bx_K\|_{\cA,K}^2 - C_K \frac{|e_i|}{|K|} \left(\cA (\bx - \bx_K),\, \bn_{i,K} \right)_K.
\end{aligned}
$$
This completes the proof for $M_{b,0}(i,j)$ and $M_{0,b}(j,i)$.
The proof for $M_{0,b}^{\partial}(j,i)$ is similar.
\end{proof}

For the calculation of $M_{b,b}$ and $M_{b,b}^{\partial}$, we need to know how many elements 
are sharing a given edge.
We first consider the situation of two interior edges which can be the same.
Denote by $\T_{i,j}$ the collection of triangles in $\T_h$ that contain both $e_i$ and $e_j$ as edges, i.e.,
\[
\T_{i,j} = \{ K \in \T_h:\; e_i, e_j \in \partial K \} .
\]
When $e_i$ and $e_j$ are the same, $T_{i,j}$ contains two elements sharing the edge.
If they are not the same, they can be either the edges of a triangle or two different triangles.
$T_{i,j}$ contains an element in the former case and none in the latter. To summarize, $T_{i,j}$ is given by
\[
\T_{i,j} = \begin{cases}  \{ K, K'\}, &\text{ for } e_i = e_j \text{ (where $K$ and $K'$ are elements satisfying
					 $e_i = e_j\in \partial K \cap \partial K'$)} \\
\{ K \} , &\text{ for } e_i \ne e_j \text{ and if there exists an element $K$ such that } e_i, e_j \in \partial K \\
\emptyset , &\text{ for } e_i \ne e_j \text{ and if there is no element $K$ such that } e_i, e_j \in \partial K. \\
\end{cases}
\]
For the situation where $e_i$ is an interior edge and $e_j^\partial$ is a boundary edge, $\T_{i,j}^\partial$ contains
at most one triangle in $\T_h$ that takes both $e_i$ and $e_j^\partial$ as its edges.

\begin{lemma}
\label{lem:Mbb}
Letting $e_i$ and $e_j$ be two interior edges, then
\begin{align*}
M_{b,b}(i,j) & = \sum_{K\in \T_{i,j}} \left[ \frac{C_K^2}{9} \|\bx - \bx_K\|_{\cA,K}^2
+ \frac{C_K}{3|K|} (\cA (\bx - \bx_K),\, |e_i|\bn_{i,K} + |e_j|\bn_{j,K})_K \right.
\\
& \qquad \qquad \left. \frac{}{}
+\; \frac{|e_i|\, |e_j|}{|K|^2} (\cA\bn_{j,K},\, \bn_{i,K})_K \right] .
\end{align*}
For the case where $e_i$ is an interior edge and $e_j^{\partial}$ is a boundary edge, 
\begin{align*}
M_{b,b}^{\partial}(i,j) & = \sum_{K\in \T_{i,j}^{\partial}} \left[ \frac{C_K^2}{9} \|\bx - \bx_K\|_{\cA,K}^2
+ \frac{C_K}{3|K|} (\cA (\bx - \bx_K),\, |e_i|\bn_{i,K} + |e_j^{\partial}|\bn_{j,K}^{\partial})_K \right.
\\
& \qquad \qquad \left. \frac{}{}
+\; \frac{|e_i|\, |e_j^{\partial}|}{|K|^2} (\cA\bn_{j,K}^{\partial},\, \bn_{i,K})_K \right] .
\end{align*}
\end{lemma}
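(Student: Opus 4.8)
The plan is to reduce the assembly of each matrix entry to a finite sum of elementwise integrals and then evaluate those integrals by substituting the closed-form expressions for the weak gradients. First I would recall that by definition $M_{b,b}(i,j) = (\cA\nabla_w\phi_{b,j},\,\nabla_w\phi_{b,i})$, an integral over all of $\Omega$. By Remark~\ref{rem:basis0b}, the support of $\nabla_w\phi_{b,i}$ consists precisely of the elements having $e_i$ as an edge, and likewise for $\nabla_w\phi_{b,j}$. Hence the integrand vanishes on any triangle that does not contain both $e_i$ and $e_j$, and the global integral collapses to a sum over $\T_{i,j}$. This is where the case distinction matters: when $e_i=e_j$ the set $\T_{i,j}$ consists of the two triangles sharing that edge, whereas for distinct edges it is a single triangle or empty, so the summation notation automatically accounts for all contributing elements.

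Next I would substitute the explicit formula from Lemma~\ref{lem:basisb}, namely $\nabla_w\phi_{b,i}|_K = \frac{C_K}{3}(\bx-\bx_K) + \frac{|e_i|}{|K|}\bn_{i,K}$ together with the analogous expression for $\phi_{b,j}$, into the elementwise inner product $(\cA\nabla_w\phi_{b,j},\,\nabla_w\phi_{b,i})_K$. Expanding bilinearly produces four terms. The pure $(\bx-\bx_K)$-$(\bx-\bx_K)$ term gives $\frac{C_K^2}{9}\|\bx-\bx_K\|_{\cA,K}^2$ directly from the definition (\ref{AX-1}); the pure normal-normal term gives $\frac{|e_i|\,|e_j|}{|K|^2}(\cA\bn_{j,K},\,\bn_{i,K})_K$, since the two unit normals are constant on $K$ and may be pulled out of the integral.

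The two cross terms require the symmetry of $\cA$. One cross term pairs $\cA(\bx-\bx_K)$ with $\bn_{i,K}$, while the other pairs $\cA\bn_{j,K}$ with $(\bx-\bx_K)$; rewriting the latter as $(\cA(\bx-\bx_K),\,\bn_{j,K})_K$ via $\cA^T=\cA$ lets me combine the two into the single term $\frac{C_K}{3|K|}(\cA(\bx-\bx_K),\,|e_i|\bn_{i,K}+|e_j|\bn_{j,K})_K$. Summing the four contributions over $K\in\T_{i,j}$ then yields the stated formula. The boundary entry $M_{b,b}^{\partial}(i,j)$ is obtained by the identical computation with $\phi_{b,j}$ replaced by $\phi_{b,j}^{\partial}$ and the summation taken over $\T_{i,j}^{\partial}$, which by construction contains at most one triangle.

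Since every ingredient is already explicit, there is no genuine analytic difficulty and the computation is entirely routine. The only points demanding care are the bookkeeping of the summation set $\T_{i,j}$ (especially the two-element case $e_i=e_j$) and the correct use of the symmetry of $\cA$ to merge the mixed terms, without which the two cross contributions would not coalesce into the single symmetric term displayed in the statement.
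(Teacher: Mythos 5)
Your proposal is correct and follows essentially the same route as the paper: localize the global inner product to the triangles in $\T_{i,j}$ (resp.\ $\T_{i,j}^{\partial}$) via the support of the weak gradients, substitute the explicit formulas from Lemma~\ref{lem:basisb}, and expand bilinearly, using the symmetry of $\cA$ to merge the two cross terms. The paper simply states that the result ``follows directly'' from Lemma~\ref{lem:basisb} and the elementwise sum; your write-up fills in exactly the routine expansion that the authors leave implicit.
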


\begin{proof}
The results follow directly from Lemma \ref{lem:basisb} and
\[
M_{b,b}(i,j) = \sum_{K\in \T_{i,j}} (\cA \nabla_w \phi_{b,j},\, \nabla_w \phi_{b,i})_K,
\quad 
M_{b,b}^{\partial}(i,j) = \sum_{K\in \T_{i,j}^{\partial}} (\cA \nabla_w \phi_{b,j}^{\partial},\, \nabla_w \phi_{b,i})_K .
\]
\end{proof}

\begin{remark} \label{rem:Mbb}
From this lemma, we can see that the requirement of the off-diagonal entries of $M_{b,b}$ being nonpositive yields
the mesh condition
\[
\frac{C_K^2}{9} \|\bx - \bx_K\|_{\cA,K}^2
+ \frac{C_K}{3|K|} (\cA (\bx - \bx_K),\, |e_i|\bn_{i,K} + |e_j|\bn_{j,K})_K
+ \frac{|e_i|\, |e_j|}{|K|^2} (\cA\bn_{j,K},\, \bn_{i,K})_K \le 0 .
\]
To get a feel for this condition, we consider a simple situation with $\cA = I$ and $e_i$ and $e_j$ being
two different edges of a triangle $K$. From (\ref{eq:ck}) and (\ref{eq:basis0}), the inequality reduces to
\[
\frac{4 |K|^2}{9 \|\bx - \bx_K\|_{K}^2}
+ \frac{|e_i|\, |e_j|}{|K|} \bn_{j,K} \cdot \bn_{i,K} \le 0 .
\]
Denote the internal angle of $K$ formed by edges $e_i$ and $e_j$ by $\theta$.
From $|K| = \frac{1}{2} |e_i|\, |e_j| \sin\theta$, the above condition becomes
\begin{equation} \label{eq:fullsystemmeshcondition}
\cot\theta \ge \frac{2|K|^2}{9\|\bx-\bx_K\|_K^2}.
\end{equation}
Since the right-hand side has a lower bound
\[
\frac{2|K|^2}{9\|\bx-\bx_K\|_K^2} \ge \frac{2|K|^2}{9\int_K h_K^2\, dx} = \frac{2|K|}{9h_K^2},
\]
therefore, for an element with $|K| = \mathcal{O}(h_K^2)$ the mesh condition requires $\cot\theta$ to be $\mathcal{O}(1)$
away from zero, i.e., $\theta$ be $\mathcal{O}(1)$ away from $\pi/2$.
This is much stronger than that to be obtained with the reduced system.
As will be seen in Theorem~\ref{thm:pwconstant} in the next section, the mesh condition
obtained with the reduced system only requires $\theta$ to be nonobtuse for the current situation $\cA = I$
(and for a more general situation with piecewise constant $\cA$).
\end{remark}

\begin{lemma}
\label{lem:Aelem}
For any two interior edges $e_i$ and $e_j$,
$$
A(i,j) = \sum_{K\in \T_{i,j}}  \frac{|e_i|\, |e_j|}{|K|^2} \left[ (\cA\bn_{j,K},\, \bn_{i,K})_K -
\frac{(\cA(\bx-\bx_K),\, \bn_{i,K})_K\, (\cA(\bx-\bx_K),\, \bn_{j,K})_K}{\|\bx-\bx_K\|_{\cA,K}^2} \right] .
$$
Moreover, for any interior edge $e_i$ and boundary edge $e_j^{\partial}$,
$$
A^{\partial}(i,j) = \sum_{K\in \T_{i,j}^{\partial}}  \frac{|e_i|\, |e_j^{\partial}|}{|K|^2} \left[ (\cA\bn_{j,K}^{\partial},\, \bn_{i,K})_K -
\frac{(\cA(\bx-\bx_K),\, \bn_{i,K})_K\, (\cA(\bx-\bx_K),\, \bn_{j,K}^{\partial})_K}{\|\bx-\bx_K\|_{\cA,K}^2} \right] .
$$
\end{lemma}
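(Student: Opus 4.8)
The plan is to obtain $A(i,j)$ by direct substitution of the closed forms from Lemmas~\ref{lem:M00}--\ref{lem:Mbb} into the definition $A = M_{b,b} - M_{b,0}M_{0,0}^{-1}M_{0,b}$, and then to track the cancellation of the terms coming from the ``radial'' part $\bx-\bx_K$ of the weak gradients. The computation is entirely explicit; the only thing to verify is that the correct terms cancel.

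First I would use that $M_{0,0}$ is diagonal (Lemma~\ref{lem:M00}) to write the correction entrywise as $(M_{b,0}M_{0,0}^{-1}M_{0,b})(i,j)=\sum_k M_{b,0}(i,k)\,M_{0,0}(k,k)^{-1}\,M_{0,b}(k,j)$. By Lemma~\ref{lem:Mb0}, the factor $M_{b,0}(i,k)$ is nonzero only when $e_i$ is an edge of the $k$-th triangle, and $M_{0,b}(k,j)$ only when $e_j$ is; hence the sum collapses to exactly those triangles $K\in\T_{i,j}$ containing both $e_i$ and $e_j$. This is the same summation set already appearing in $M_{b,b}(i,j)$ (Lemma~\ref{lem:Mbb}), so $A(i,j)$ reduces to a single sum over $\T_{i,j}$ of per-element contributions that I can treat one triangle at a time; the argument is identical whether $e_i=e_j$ or not, so the diagonal case $i=j$ needs no separate handling.

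On a fixed $K\in\T_{i,j}$, abbreviate $s=C_K^2\|\bx-\bx_K\|_{\cA,K}^2=M_{0,0}(k,k)$ and $p_i=C_K\tfrac{|e_i|}{|K|}(\cA(\bx-\bx_K),\bn_{i,K})_K$, so that Lemma~\ref{lem:Mb0} gives $M_{b,0}(i,k)=-\tfrac{s}{3}-p_i$ and $M_{0,b}(k,j)=-\tfrac{s}{3}-p_j$, while the $K$-term of $M_{b,b}(i,j)$ reads $\tfrac{s}{9}+\tfrac{p_i}{3}+\tfrac{p_j}{3}+\tfrac{|e_i|\,|e_j|}{|K|^2}(\cA\bn_{j,K},\bn_{i,K})_K$. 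Expanding $\tfrac{1}{s}(-\tfrac{s}{3}-p_i)(-\tfrac{s}{3}-p_j)=\tfrac{s}{9}+\tfrac{p_i}{3}+\tfrac{p_j}{3}+\tfrac{p_ip_j}{s}$ and subtracting it, the three terms $\tfrac{s}{9},\tfrac{p_i}{3},\tfrac{p_j}{3}$ cancel identically, leaving $\tfrac{|e_i|\,|e_j|}{|K|^2}(\cA\bn_{j,K},\bn_{i,K})_K-\tfrac{p_ip_j}{s}$. Substituting the definitions of $p_i,p_j,s$ back in and factoring out $\tfrac{|e_i|\,|e_j|}{|K|^2}$ turns $p_ip_j/s$ into $\tfrac{|e_i|\,|e_j|}{|K|^2}\cdot\tfrac{(\cA(\bx-\bx_K),\bn_{i,K})_K\,(\cA(\bx-\bx_K),\bn_{j,K})_K}{\|\bx-\bx_K\|_{\cA,K}^2}$, which is exactly the bracketed expression in the statement; summing over $K\in\T_{i,j}$ finishes the interior case. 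The formula for $A^\partial(i,j)$ follows verbatim upon replacing $\bn_{j,K},|e_j|$ by $\bn_{j,K}^\partial,|e_j^\partial|$ and using the boundary entries of Lemma~\ref{lem:Mb0} and $M_{b,b}^\partial$.

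There is no deep obstacle here; the work is purely algebraic. The one point deserving care is confirming that the three radial terms cancel exactly, and this is guaranteed structurally rather than by luck: forming $M_{b,b}-M_{b,0}M_{0,0}^{-1}M_{0,b}$ is precisely the $\cA(\cdot,\cdot)_K$-orthogonal projection of each edge gradient onto the complement of the element gradient $\nabla_w\phi_{0,k}|_K=-C_K(\bx-\bx_K)$. Since by Lemma~\ref{lem:basisb} the radial component of $\nabla_w\phi_{b,i}|_K$ is a multiple of $\nabla_w\phi_{0,k}|_K$, that component is annihilated and only the genuinely normal interaction survives. Keeping this viewpoint in mind makes the cancellation transparent and furnishes a useful check: the surviving bracket is the $\cA$-inner product of the projected normals, and in particular, by Cauchy--Schwarz in the $\cA$-metric, the diagonal entries $A(i,i)$ are nonnegative, as expected for a positive-definite Schur complement.
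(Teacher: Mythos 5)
Your proof is correct and follows the same route as the paper: the paper's proof likewise uses the sparsity of $M_{b,0}$, $M_{0,0}$, $M_{0,b}$ to collapse the Schur-complement correction to a sum over $\T_{i,j}$ and then invokes a ``straightforward calculation'' from Lemmas~\ref{lem:Mb0} and \ref{lem:Mbb}, which is exactly the cancellation of the $s/9$, $p_i/3$, $p_j/3$ terms you carry out explicitly. Your projection interpretation is a nice structural check but not a different argument.
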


\begin{proof}
Denote by $K_k$ the $k^{\text{th}}$ triangle in $\T_h$.
Using the sparsity pattern of $M_{b,0}$, $M_{0,0}$ and $M_{0,b}$, it is not hard to see that
$$
A(i,j) = M_{b,b}(i,j) - \sum_{K_k\in \T_{i,j}}  M_{b,0}(i,k)\, M_{0,0}^{-1}(k,k)\, M_{0,b}(k,j) .
$$
The rest is a straightforward calculation using Lemmas \ref{lem:Mb0} and \ref{lem:Mbb}.
The calculation of $A^{\partial}(i,j)$ is similar.
\end{proof}

\begin{remark}
\label{rem:pwconst-Asign}
If the diffusion coefficient $\cA$ is piecewise constant on $\T_h$, then by (\ref{eq:basis0}) we have
$$
A(i,j) = \sum_{K\in \T_{i,j}}  \frac{|e_i|\, |e_j|}{|K|^2} (\cA\bn_{j,K},\, \bn_{i,K})_K,
\quad 
A^{\partial}(i,j) = \sum_{K\in \T_{i,j}^{\partial}}  \frac{|e_i|\, |e_j^{\partial}|}{|K|^2} (\cA\bn_{j,K}^{\partial},\, \bn_{i,K})_K .
$$
In this case, it is not difficult to see that system (\ref{eq:SchurComplement}) is exactly the same as
the discrete system of the lowest order Crouzeix-Raviart element ($P_1$ nonconforming FE)
for (\ref{eq:weakformulation}). Since the mixed-hybrid FE discretization is also equivalent to
the $P_1$ nonconforming FE discretization when $\cA$ is piecewise constant \cite{Arnold85},
we know that the weak Galerkin method is equivalent to the mixed-hybrid FE discretization in this case.
This implies that for piecewise constant $\cA$, our DMP analysis also applies to 
$P_1$ nonconforming and mixed-hybrid FE discretizations which have been studied very little in the past
for anisotropic diffusion problems.

It should also be pointed out that the equivalence is valid only in the sense that
the weak Galerkin solution $u_b$ on edges is equal
to the Lagrange multiplier used in the mixed-hybrid FE discretization.
On the other hand, the flux $\cA\nabla_w u_h$ and
the values of $u_0$ in the weak Galerkin method are generally different from
the dual and primal variables in the mixed-hybrid FE discretization. 
They are identical only when $\cA$ is of the form $c I$ for some constant $c$.
\qed
\end{remark}

\begin{remark}
When $\cA$ is not piecewise constant, the weak Galerkin method is generally different from the 
nonconforming or mixed-hybrid FE method.  The difference between the weak Galerkin method and
the nonconforming FE method can be seen by comparing the entries of the coefficient matrix $\bar{A}$.
The difference between the weak Galerkin method and the mixed-hybrid FE method, on the other hand,
can be observed from the following fact.
In the weak Galerkin method, the weak gradient $\nabla_w u$ lies in the discrete Raviart-Thomas space $\Sigma_h$
but the flux $\cA\nabla_w u$ does not, whereas in the mixed-hybrid FE, the flux $\cA\nabla u$ lies in the 
Raviart-Thomas space but the gradient $\nabla u$ does not.
\end{remark}

\begin{lemma}
\label{lem:spd}
Matrix $A$ is symmetric and positive definite.
\end{lemma}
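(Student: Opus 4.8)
The plan is to recognize $A$ as the Schur complement of the symmetric block matrix
$$
\mathcal{M} = \begin{bmatrix} M_{0,0} & M_{0,b} \\ M_{b,0} & M_{b,b} \end{bmatrix}
$$
with respect to its leading block $M_{0,0}$, and then to invoke the standard fact that the Schur complement of a symmetric positive definite matrix, taken with respect to a positive definite principal submatrix, is again symmetric positive definite. Thus it suffices to establish two things: that $M_{0,0}$ is positive definite, and that $\mathcal{M}$ is symmetric positive definite.

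Symmetry of $A$ is immediate. Since $\cA$ is symmetric, the bilinear form $(\cA \nabla_w \cdot,\, \nabla_w \cdot)$ is symmetric, so $M_{b,b}$ is symmetric and $M_{0,b} = M_{b,0}^T$; moreover $M_{0,0}$ is diagonal by Lemma~\ref{lem:M00}. Hence $A^T = M_{b,b} - M_{0,b}^T M_{0,0}^{-1} M_{b,0}^T = A$. Positive definiteness of $M_{0,0}$ likewise follows from Lemma~\ref{lem:M00}: each diagonal entry equals $C_K^2\|\bx-\bx_K\|_{\cA,K}^2 > 0$ because $C_K>0$ and $\cA$ is uniformly positive definite.

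The substantive step is the positive definiteness of $\mathcal{M}$. For $\bv = \begin{bmatrix} \bv_0 \\ \bv_b\end{bmatrix}$ with corresponding $v_h = \{v_0,v_b\}\in V_h^0$, the quadratic form $(\mathcal{M}\bv,\bv)$ equals $(\cA\nabla_w v_h,\, \nabla_w v_h)\ge 0$, so it remains to show it vanishes only at $\bv=\vzero$. If it vanishes, positive definiteness of $\cA$ forces $\nabla_w v_h=\vzero$ on every $K\in\T_h$. Using Lemmas~\ref{lem:basis0} and \ref{lem:basisb}, on an element $K$ with edges $e_1,e_2,e_3$, interior value $v_0$, and edge values $v_{b,1},v_{b,2},v_{b,3}$ one finds
$$
\nabla_w v_h|_K = C_K\Big(-v_0 + \tfrac{1}{3}\sum_{m=1}^3 v_{b,m}\Big)(\bx-\bx_K) + \frac{1}{|K|}\sum_{m=1}^3 v_{b,m}|e_m|\bn_{m,K}.
$$
Averaging over $K$ and using (\ref{eq:basis0}) annihilates the linear term and shows the constant part $\sum_m v_{b,m}|e_m|\bn_{m,K}$ vanishes; the linear coefficient must then vanish as well. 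Since $\sum_m |e_m|\bn_{m,K}=\vzero$ and any two outward edge-normals of a triangle are linearly independent, the vanishing of the constant part forces $v_{b,1}=v_{b,2}=v_{b,3}$, whence $v_0$ equals this common value. Thus $v_h$ is constant on each element.

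Finally, because each interior edge degree of freedom is shared by the two elements abutting it, these per-element constants agree across shared edges, and therefore $v_h$ is constant on the connected mesh; as $v_h\in V_h^0$ has zero boundary-edge values, the constant is $0$, so $\bv=\vzero$. Hence $\mathcal{M}$ is symmetric positive definite, and with $M_{0,0}$ positive definite the Schur complement $A$ is symmetric positive definite. I expect the main obstacle to be the element-level argument that the kernel of $\nabla_w$ contains only constants, where the geometric identity $\sum_m|e_m|\bn_{m,K}=\vzero$ and the linear independence of the edge normals are essential.
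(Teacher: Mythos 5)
Your proof is correct, but it takes a genuinely different route from the paper's in the substantive step. The paper splits the claim into positive \emph{semi}-definiteness and nonsingularity: for the former it writes $\bv^T A\bv = \sum_{i}\bigl( (\cA\nabla_w v_h,\nabla_w v_h)_{K_i} - (\cA\nabla_w v_h,\nabla_w\phi_{0,i})_{K_i}^2/(\cA\nabla_w\phi_{0,i},\nabla_w\phi_{0,i})_{K_i}\bigr)$ with $v_h=\sum_i v_i\phi_{b,i}$ and applies the Cauchy--Schwarz inequality elementwise in the $(\cA\,\cdot,\cdot)_{K_i}$ inner product; for the latter it simply observes that $\bar A$ is the Schur complement of the nonsingular matrix $M$ (nonsingularity being inherited from the well-posedness of the weak Galerkin scheme established in the cited references). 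You instead identify $A$ as the Schur complement of the interior principal submatrix $\mathcal{M}$ and prove $\mathcal{M}$ is itself positive definite by an explicit kernel computation: $\nabla_w v_h=\vzero$ forces $v_h$ to be constant on each closed element (via the expansion of $\nabla_w v_h|_K$, the identity $\sum_m|e_m|\bn_{m,K}=\vzero$, and linear independence of two edge normals), and then connectivity plus the zero boundary-edge values force $v_h\equiv 0$. Your argument is entirely self-contained --- it does not lean on the external fact that $M$ is invertible --- and as a by-product it characterizes the kernel of the weak gradient on $V_h$, which is of independent interest; the price is the longer element-level computation. The paper's version is shorter but defers the strict positivity to the well-posedness results of \cite{WangYe_PrepSINUM_2011}. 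Both are valid; one small economy available to you is that once $\mathcal{M}$ is shown positive definite, the positive definiteness of $M_{0,0}$ follows automatically as a principal submatrix, so your separate verification of it is redundant.
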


\begin{proof}
It is easy to see that $A$ is symmetric. Next we show that $A$ is positive semi-definite. For any given vector
$\bv\in \mathbb{R}^{N_b}$, we denote $v_h = \sum_{i=1}^{N_b} v_i \phi_{b,i}$. Noticing $M_{0,b} = M_{b,0}^T$, we have
$$
\begin{aligned}
\bv^T A \bv &= \bv^T M_{b,b} \bv - (M_{0,b}\bv)^T M_{0,0}^{-1} (M_{0,b}\bv) \\
  &= (\cA \nabla_w v_h,\, \nabla_w v_h) -
     \sum_{i=1}^{N_0} \frac{(\cA \nabla_w v_h,\, \nabla_w \phi_{0,i})_{K_i}^2}{(\cA \nabla_w \phi_{0,i},\, \nabla_w \phi_{0,i})_{K_i}} \\
  &= \sum_{i=1}^{N_0} \left( (\cA \nabla_w v_h,\, \nabla_w v_h)_{K_i} -
      \frac{(\cA \nabla_w v_h,\, \nabla_w \phi_{0,i})_{K_i}^2}{(\cA \nabla_w \phi_{0,i},\, \nabla_w \phi_{0,i})_{K_i}} \right).
\end{aligned}
$$
Using the Schwartz inequality on each $K_i$, it is not difficult to see that $\bv^T A \bv \ge 0$.
To show $A$ is nonsingular, we notice that matrix $\bar{A} = \begin{bmatrix}A&A^{\partial}\\ 0 & I\end{bmatrix}$
comes from the
Schur complement of matrix $M$. Since $M$ is non-singular, its Schur complement must also be non-singular.
Hence, $A$ is non-singular.
\end{proof}

\begin{lemma}
\label{lem:rowsum}
All row sums of $\bar{A}$ are nonnegative.
\end{lemma}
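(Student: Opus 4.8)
The plan is to exploit the block structure $\bar{A}=\begin{bmatrix}A&A^{\partial}\\0&I\end{bmatrix}$ together with the fact that the globally constant weak function has vanishing weak gradient. Each row coming from the trailing identity block sums to $1\ge 0$, so only the rows of the top block $[A\ \ A^{\partial}]$ require attention, and their row-sum vector is precisely $A\mathbf{1}_b+A^{\partial}\mathbf{1}_b^{\partial}$, where $\mathbf{1}_b$ and $\mathbf{1}_b^{\partial}$ denote the all-ones vectors of the appropriate sizes. I will show this vector is in fact exactly zero.

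First I would establish the key identity $\nabla_w v_h=\vzero$ for the function $v_h=\{1,1\}$ that equals $1$ on every element interior and every edge. Writing $v_h=\sum_j\phi_{0,j}+\sum_j\phi_{b,j}+\sum_j\phi_{b,j}^{\partial}$, this follows directly from the defining relation (\ref{discrete-weak-gradient-new}): with $v_0\equiv 1$ and $v_b\equiv 1$ on each $K$, the volume term $-(1,\nabla\cdot\vq)_K$ and the boundary term $\langle 1,\vq\cdot\bn\rangle_{\partial K}$ cancel by the divergence theorem for every $\vq\in RT_0(K)$. Next I would translate this into two block identities produced by $M$. Since the first two block-rows of $M$ are inner products of $\cA\nabla_w(\cdot)$ against $\nabla_w\phi_{0,i}$ and $\nabla_w\phi_{b,i}$, evaluating $M$ on the all-ones vector annihilates those rows, giving $M_{0,0}\mathbf{1}_0+M_{0,b}\mathbf{1}_b+M_{0,b}^{\partial}\mathbf{1}_b^{\partial}=\vzero$ and $M_{b,0}\mathbf{1}_0+M_{b,b}\mathbf{1}_b+M_{b,b}^{\partial}\mathbf{1}_b^{\partial}=\vzero$.

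Substituting these into $A\mathbf{1}_b+A^{\partial}\mathbf{1}_b^{\partial}=M_{b,b}\mathbf{1}_b+M_{b,b}^{\partial}\mathbf{1}_b^{\partial}-M_{b,0}M_{0,0}^{-1}(M_{0,b}\mathbf{1}_b+M_{0,b}^{\partial}\mathbf{1}_b^{\partial})$, and using the first identity to replace the parenthesized term by $-M_{0,0}\mathbf{1}_0$, makes $M_{0,0}^{-1}$ cancel and leaves $-M_{b,0}\mathbf{1}_0+M_{b,0}\mathbf{1}_0=\vzero$. Hence every top-block row sum is $0$ and every bottom-block row sum is $1$, so all row sums of $\bar{A}$ are nonnegative.

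I do not expect a serious obstacle; the argument is mostly bookkeeping once the constant-gradient fact is in hand. The one point needing care is that the constant weak function must use the interior and the edge basis functions \emph{simultaneously} (so that $v_0$ and $v_b$ agree), since it is exactly the equality $v_0=v_b$ that forces the cancellation in (\ref{discrete-weak-gradient-new}). As a cross-check, one may instead verify the claim directly from the closed forms in Lemma~\ref{lem:Aelem}: summing over all edges $e_j$ of a fixed triangle $K$ reduces both bracketed terms to expressions containing $\sum_{e_j\in\partial K}|e_j|\bn_{j,K}$, which vanishes since the length-weighted outward normals of any polygon sum to the zero vector, yielding a per-element contribution of zero and recovering the same conclusion.
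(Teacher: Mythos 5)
Your proof is correct, and it takes a genuinely different route from the paper's. The paper argues directly from the closed-form entries in Lemma~\ref{lem:Aelem}: for a fixed interior edge $e_i$ it sums the $i$-th row over the triangles in $\T_i$, and both bracketed terms collapse because the length-weighted outward normals of a triangle satisfy $|\tilde e_1|\bn_1+|\tilde e_2|\bn_2+|\tilde e_3|\bn_3=\mathbf{0}$ (identity (\ref{eq:sumen})) --- this is exactly the computation you relegate to a cross-check at the end. Your primary argument is instead structural: the constant weak function $\{1,1\}$ has vanishing weak gradient because the volume and boundary terms in (\ref{discrete-weak-gradient-new}) cancel by the divergence theorem, so the all-ones vector is annihilated by the first two block rows of $M$, and the Schur-complement algebra then forces $A\mathbf{1}_b+A^{\partial}\mathbf{1}_b^{\partial}=\mathbf{0}$; I checked the cancellation $-M_{b,0}\mathbf{1}_0+M_{b,0}M_{0,0}^{-1}M_{0,0}\mathbf{1}_0=\mathbf{0}$ and it is sound. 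Both arguments deliver the sharper statement that the first $N_b$ row sums are exactly zero and the remaining ones equal $1$. At bottom they rest on the same fact --- $\int_{\partial K}\vq\cdot\bn\,ds=\int_K\nabla\cdot\vq\,d\bx$, which for constant $\vq$ is precisely (\ref{eq:sumen}) --- but packaged differently: your version never touches the explicit entries, so it would survive a change of local space, element shape, or dimension essentially verbatim, whereas the paper's version is immediate once Lemma~\ref{lem:Aelem} has been established and keeps the argument self-contained at the level of matrix entries. You are also right to flag that the cancellation requires $v_0$ and $v_b$ to be the \emph{same} constant; that is the one place a careless reading could go wrong, and you handled it correctly.
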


\begin{proof}
Let $e_i$ be an interior edge. For each triangle $K\in \T_h$,
denote by $(x_i, y_i)$, $i=1,2,3$, the vertices of $K$ and by $\tilde{e}_i$ ($i=1,2,3$)
the locally indexed edge opposite to vertex $(x_i, y_i)$.
Also denote the unit outward normal vector on these three edges
by $\bn_1$, $\bn_2$ and $\bn_3$.
Let $\T_i$ be the collection of two triangles sharing edge $e_i$.
By Lemma \ref{lem:Aelem}, the sum of all entries on the $i^{\text{th}}$ row, $1\le i\le N_b$, of matrix $\bar{A}$ is
$$
\begin{aligned}
\sum_{j = 1}^{N_b+N_b^{\partial}} \bar{A}(i,j) 
& = \sum_{K\in \T_i}\frac{|e_i|}{|K|^2} \bigg( \left(\cA(|\tilde{e}_1|\bn_1 + |\tilde{e}_2|\bn_2 + |\tilde{e}_3|\bn_3),\,
   \bn_{i,K}\right)_K\\
& \qquad  \qquad \qquad - \frac{(\cA(\bx-\bx_K),\, \bn_{i,K})_K\, (\cA(\bx-\bx_K),\,
|\tilde{e}_1|\bn_1 + |\tilde{e}_2|\bn_2 + |\tilde{e}_3|\bn_3)_K}{\|\bx-\bx_K\|_{\cA,K}^2}\bigg).
\end{aligned}
$$
Notice that
\begin{equation} \label{eq:sumen}
\begin{aligned}
&|\tilde{e}_1|\bn_1 + |\tilde{e}_2|\bn_2 + |\tilde{e}_3|\bn_3 
= \begin{bmatrix}y_3-y_2\\ -(x_3-x_2)\end{bmatrix} + \begin{bmatrix}y_1-y_3\\ -(x_1-x_3)\end{bmatrix}
+ \begin{bmatrix}y_2-y_1\\ -(x_2-x_1)\end{bmatrix} 
= \mathbf{0}.
\end{aligned}
\end{equation}
Combining the above results, we know that the sum of each of the first $N_b$ rows of matrix $\bar{A}$ is $0$.
The rest of the row sums are just equal to $1$.
\end{proof}

\section{Discrete maximum principle}
\label{sec:DMP}

We now study the maximum principle for the weak Galerkin approximation (\ref{eq:SchurComplement}).
The weak Galerkin approximation to the solution of BVP (\ref{eq:pde}) on edges is said to satisfy DMP if
\begin{equation}
f(\bx) \le 0, \quad \forall \bx \in \Omega \quad\Longrightarrow \quad \max_{1\le i\le N_b} u_{b,i} \le \max\{0, \, \max_{1\le i\le N_b^{\partial}} u_{b,i}^{\partial} \}.
\label{eq:dmp-1}
\end{equation}
The maximum principle has been studied extensively in the past for systems in the form (\ref{eq:SchurComplement}).
For example, Ciarlet \cite{Cia70} shows that the DMP
\begin{equation}
-M_{b,0}M_{0,0}^{-1} \bF_0 \le 0
\quad\Longrightarrow \quad \max_{1\le i\le N_b} u_{b,i} \le \max\{0, \, \max_{1\le i\le N_b^{\partial}} u_{b,i}^{\partial} \}
\label{eq:dmp-2}
\end{equation}
holds if and only if
\begin{itemize}
\item[(a)] $\bar{A}$ is monotone, i.e., $\bar{A}$ is nonsingular and $\bar{A}^{-1}\ge 0$; and
\item[(b)] $\bxi + A^{-1}A^{\partial}\bxi^{\partial} \ge 0$, where $\bxi\in \mathbb{R}^{N_b}$ and
 $\bxi^{\partial}\in\mathbb{R}^{N_b^{\partial}}$ are vectors consisting of all ones,
\end{itemize}
where, and hereafter, unless stated otherwise the sign ``$\le$'' or ``$\ge$'' is in the elementwise sense
when used for vectors or matrices. 

The following Lemma is well-known. For completeness, a brief proof is provided.

\begin{lemma}
\label{lam:linalgDMP}
The above conditions (a) and (b) hold if
\begin{enumerate}
\item[(i)] $A$ is positive definite; and
\item[(ii)] All of the off-diagonal entries of $\bar{A}$ are nonpositive; and
\item[(iii)] All of the row sums of $\bar{A}$ are nonnegative.
\end{enumerate}
\end{lemma}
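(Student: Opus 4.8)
The plan is to exploit the block lower-triangular structure of $\bar{A}$, which makes its inverse explicit, and to reduce conditions (a) and (b) to the single claim that $A^{-1}\ge 0$. First I would record that, since the $(2,1)$ block of $\bar{A}$ is zero and its $(2,2)$ block is the identity, $\bar{A}$ is nonsingular precisely when $A$ is, and in that case
$$
\bar{A}^{-1} = \begin{bmatrix} A^{-1} & -A^{-1}A^{\partial}\\ 0 & I \end{bmatrix}.
$$
By hypothesis (i), $A$ is positive definite and hence nonsingular, so $\bar{A}$ is nonsingular as well. To obtain $\bar{A}^{-1}\ge 0$ it therefore suffices to show $A^{-1}\ge 0$ and $-A^{-1}A^{\partial}\ge 0$.

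The key step is to identify $A$ as a symmetric $M$-matrix. Condition (ii) guarantees that the off-diagonal entries of $A$ are nonpositive, so writing $s = \max_i A(i,i)$ and $B = sI - A$ gives $A = sI - B$ with $B$ symmetric and $B\ge 0$ entrywise. Since $B$ is symmetric and nonnegative, its spectral radius $\rho(B)$ equals its largest eigenvalue; and since $A = sI - B$ is positive definite, the smallest eigenvalue $s-\rho(B)$ of $A$ is strictly positive, i.e. $\rho(B) < s$. Thus $A$ is an $M$-matrix and consequently $A^{-1}\ge 0$. Combining this with condition (ii), which also forces every entry of $A^{\partial}$ to be nonpositive (these entries are off-diagonal entries of $\bar{A}$), the product $A^{-1}A^{\partial}$ is entrywise nonpositive, so $-A^{-1}A^{\partial}\ge 0$. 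Together with $A^{-1}\ge 0$ this yields $\bar{A}^{-1}\ge 0$, establishing the monotonicity claim (a).

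For (b) I would invoke the row-sum hypothesis (iii). The nonnegativity of the first $N_b$ row sums of $\bar{A}$ says exactly that $\br = A\bxi + A^{\partial}\bxi^{\partial}\ge 0$, where $\bxi$ and $\bxi^{\partial}$ are the all-ones vectors. Solving for $\bxi$ gives $\bxi = A^{-1}\br - A^{-1}A^{\partial}\bxi^{\partial}$, whence
$$
\bxi + A^{-1}A^{\partial}\bxi^{\partial} = A^{-1}\br \ge 0,
$$
since $A^{-1}\ge 0$ and $\br\ge 0$. This is precisely condition (b).

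The only genuinely nonroutine step is the passage from ``positive definite with nonpositive off-diagonal entries'' to $A^{-1}\ge 0$; everything else is block algebra and bookkeeping. The argument above handles it through the $sI - B$ representation together with the fact that a symmetric nonnegative matrix attains its spectral radius as its largest eigenvalue, so that positive definiteness is equivalent to $\rho(B) < s$. I expect this to be the crux, with the remaining verifications of (a) and (b) being immediate once $A^{-1}\ge 0$ is in hand.
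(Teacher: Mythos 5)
Your proposal is correct and follows essentially the same route as the paper: both arguments reduce everything to showing that $A$ is an $M$-matrix (hence $A^{-1}\ge 0$) and then read conditions (a) and (b) off the explicit block-triangular inverse of $\bar{A}$ together with the nonpositivity of $A^{\partial}$ and the row-sum hypothesis. The only difference is that the paper simply invokes the standard fact that a symmetric positive definite $Z$-matrix is an $M$-matrix, whereas you prove it directly via the $sI-B$ splitting and Perron--Frobenius; this is a harmless elaboration rather than a different approach.
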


\begin{proof}
Conditions (ii) and (iii) imply that $\bar{A}$ is a Z-matrix (defined as a matrix with nonpositive off-diagonal entries
and nonnegative diagonal entries) and therefore, $A$ is a Z-matrix too.
This, together with (i), implies that $A$ is an $M$-matrix and thus $A^{-1} \ge 0$. 
Condition (a) follows by directly examining $\bar{A}^{-1} = \begin{bmatrix} A^{-1}  & - A^{-1} A^{\partial} \\
0 & I \end{bmatrix}$ and using (ii). Condition (b) follows from (iii) and the fact that $A$ is monotone.
\end{proof}

We should point out that there is a difference between (\ref{eq:dmp-1}) and (\ref{eq:dmp-2}).
Generally speaking, $f(\bx)\le 0$ does not guarantee 
\begin{equation}
-M_{b,0}M_{0,0}^{-1} \bF_0 \le 0 .
\label{eq:F-1}
\end{equation}
Thus, we need to include (\ref{eq:F-1}) as a part of the condition for the weak Galerkin approximation
to satisfy DMP.

We now examine system (\ref{eq:SchurComplement}) more closely.
From Lemmas \ref{lem:spd}, \ref{lem:rowsum}, and \ref{lam:linalgDMP},
to verify the maximum principle we need to check
the sign of the off-diagonal entries of $\bar{A}$ and the condition (\ref{eq:F-1}).
The off-diagonal entries of $\bar{A}$ are given in Lemma \ref{lem:Aelem}.
When $i\neq j$, we know that either $\T_{i,j}$ is empty, in which case $A(i,j) = 0$,
or $\T_{i,j}$ contains the only triangle $K\in \T_h$ that has both $e_i$ and $e_j$ as edges.
In this case, we have
\begin{equation}
\label{eq:Aij}
A(i,j) = \frac{|e_i|\, |e_j|}{|K|^2} \left[ (\cA\bn_{j,K},\, \bn_{i,K})_K -
\frac{(\cA(\bx-\bx_K),\, \bn_{i,K})_K\, (\cA(\bx-\bx_K),\, \bn_{j,K})_K}{\|\bx-\bx_K\|_{\cA,K}^2} \right] .
\end{equation}
Similarly, when interior edge $e_i$ and boundary edge $e_j^{\partial}$ are edges of triangle $K$,
\begin{equation}
\label{eq:Aij-2}
A^{\partial}(i,j) = \frac{|e_i|\, |e_j^{\partial}|}{|K|^2} \left[ (\cA\bn_{j,K}^{\partial},\, \bn_{i,K})_K -
\frac{(\cA(\bx-\bx_K),\, \bn_{i,K})_K\, (\cA(\bx-\bx_K),\, \bn_{j,K}^{\partial})_K}{\|\bx-\bx_K\|_{\cA,K}^2} \right] .
\end{equation}

\begin{theorem}
\label{thm:dmp}
If the mesh satisfies
\begin{align}
& (\cA\bn_{i,K},\, \bn_{j,K})_K \le
\frac{(\cA(\bx-\bx_K),\, \bn_{i,K})_K\, (\cA(\bx-\bx_K),\, \bn_{j,K})_K}{\|\bx-\bx_K\|_{\cA,K}^2},
\notag \\
& \qquad \qquad \qquad \qquad \qquad \qquad  \qquad \qquad \qquad \qquad
\forall\; K\in \T_h, \quad e_i, e_j, \in \partial K, \quad e_i \neq e_j
\label{eq:dmp-conditionMbb-2}
\\
& |(\cA(\bx-\bx_K),\, \bn_{i,K})_K | \le \frac{C_K |K|}{3|e_i|} \| \bx-\bx_K\|_{\cA,K}^2,\quad
\forall\; K\in \T_h, \quad e_i \in \partial K
\label{eq:dmp-conditionMbb}
\end{align}
then, $u_b$, the weak Galerkin approximation (\ref{eq:SchurComplement}) to the solution of BVP (\ref{eq:pde}) on edges,
satisfies the discrete maximum principle (\ref{eq:dmp-1}). 
\end{theorem}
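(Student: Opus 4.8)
The plan is to verify the three sufficient hypotheses of Lemma~\ref{lam:linalgDMP} together with the auxiliary condition (\ref{eq:F-1}), and then to invoke the characterization of Ciarlet \cite{Cia70} to deduce (\ref{eq:dmp-1}). Two of the three hypotheses are already available: condition (i) is Lemma~\ref{lem:spd}, and condition (iii) is Lemma~\ref{lem:rowsum}. Hence the first substantive step is to establish condition (ii), that every off-diagonal entry of $\bar{A}$ is nonpositive. The bottom block row of $\bar{A}$ is $[0,\ I]$, whose off-diagonal entries vanish, so it remains only to control the entries of $A$ and $A^{\partial}$. For $i\neq j$, formula (\ref{eq:Aij}) shows that $A(i,j)\le 0$ is equivalent to
\[
(\cA\bn_{j,K},\,\bn_{i,K})_K \le \frac{(\cA(\bx-\bx_K),\,\bn_{i,K})_K\,(\cA(\bx-\bx_K),\,\bn_{j,K})_K}{\|\bx-\bx_K\|_{\cA,K}^2},
\]
which is exactly hypothesis (\ref{eq:dmp-conditionMbb-2}) after using the symmetry of $\cA$. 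Applying the same reasoning to (\ref{eq:Aij-2}) gives $A^{\partial}(i,j)\le 0$. Thus (\ref{eq:dmp-conditionMbb-2}) yields condition (ii), and Lemma~\ref{lam:linalgDMP} then delivers conditions (a) and (b), so that the conditional discrete maximum principle (\ref{eq:dmp-2}) holds.

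Next I would upgrade (\ref{eq:dmp-2}) to the genuine DMP (\ref{eq:dmp-1}) by showing that $f\le 0$ forces (\ref{eq:F-1}). Since $\phi_{0,i}$ equals one on the $i^{\text{th}}$ element and zero elsewhere, $(\bF_0)_i=\langle f,\phi_{0,i}\rangle=\int_{K_i} f\,d\bx\le 0$, so $\bF_0\le 0$. Because $M_{0,0}$ is diagonal with strictly positive entries (Lemma~\ref{lem:M00}), it suffices to show that $-M_{b,0}M_{0,0}^{-1}$ is entrywise nonnegative, for then it maps the nonpositive vector $\bF_0$ to a nonpositive one. A nonzero entry $M_{b,0}(i,j)$ corresponds to an edge $e_i\in\partial K$ with $K$ the $j^{\text{th}}$ triangle, and by Lemma~\ref{lem:Mb0} it satisfies $M_{b,0}(i,j)\le 0$ precisely when
\[
(\cA(\bx-\bx_K),\,\bn_{i,K})_K \ge -\frac{C_K|K|}{3|e_i|}\|\bx-\bx_K\|_{\cA,K}^2,
\]
which is the lower-bound half of hypothesis (\ref{eq:dmp-conditionMbb}). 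Hence $M_{b,0}\le 0$ entrywise, $-M_{b,0}M_{0,0}^{-1}\ge 0$, and (\ref{eq:F-1}) follows.

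Combining the two steps, under $f\le 0$ we obtain both (\ref{eq:F-1}) and, by the equivalence of \cite{Cia70}, the implication (\ref{eq:dmp-2}); chaining these gives $\max_{1\le i\le N_b} u_{b,i}\le\max\{0,\max_{1\le i\le N_b^{\partial}} u_{b,i}^{\partial}\}$, which is precisely (\ref{eq:dmp-1}). Once the explicit entry formulas of Section~\ref{sec:properties} are in hand, the argument is largely assembly, so I do not anticipate a genuine obstacle; the one point requiring care is the sign bookkeeping in the second step, namely tracking the signs of $C_K$, of the diagonal entries $M_{0,0}^{-1}(j,j)$, and of $M_{b,0}(i,j)$. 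It is worth noting that only the lower bound in (\ref{eq:dmp-conditionMbb}) is actually used to force $-M_{b,0}M_{0,0}^{-1}\ge 0$; the absolute value in (\ref{eq:dmp-conditionMbb}) is a convenient symmetric sufficient condition that is insensitive to the unknown sign of $(\cA(\bx-\bx_K),\,\bn_{i,K})_K$.
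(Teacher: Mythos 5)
Your proposal is correct and follows essentially the same route as the paper's proof: verifying the three hypotheses of Lemma~\ref{lam:linalgDMP} (with (i) and (iii) supplied by Lemmas~\ref{lem:spd} and \ref{lem:rowsum}, and (ii) by condition (\ref{eq:dmp-conditionMbb-2}) via the entry formulas (\ref{eq:Aij})--(\ref{eq:Aij-2})), and then deducing (\ref{eq:F-1}) from $M_{b,0}\le 0$ (via Lemma~\ref{lem:Mb0} and condition (\ref{eq:dmp-conditionMbb})), the positivity of $M_{0,0}^{-1}$, and $\bF_0\le 0$. Your closing observation that only the lower-bound half of (\ref{eq:dmp-conditionMbb}) is used in this step is accurate and a nice refinement, though not needed for the result.
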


\begin{proof}
From (\ref{eq:Aij}) and (\ref{eq:Aij-2}), (\ref{eq:dmp-conditionMbb-2}) implies that
all off-diagonal entries of matrix $\bar{A}$ are nonpositive.
Combining this with Lemmas \ref{lem:spd} and \ref{lem:rowsum}, we know that
the conditions in Lemma \ref{lam:linalgDMP} are satisfied.

For the condition (\ref{eq:F-1}), from Lemma \ref{lem:Mb0} we see that (\ref{eq:dmp-conditionMbb}) implies that
the entries of $M_{b,0}$  are all nonpositive. Moreover, 
from Lemma \ref{lem:M00}, we know that $M_{0,0}^{-1}$ is a diagonal matrix with positive diagonal entries.
From the definitions of $\phi_{0,i}$ and $\bF_0$, we have $\bF_0\le 0$ 
when $f(\bx) \le 0$. Thus, (\ref{eq:dmp-conditionMbb}) implies (\ref{eq:F-1})
and the solution of (\ref{eq:SchurComplement}) satisfies the DMP (\ref{eq:dmp-1}).
\end{proof}

\begin{theorem}
\label{thm:dmp-u0}
Under the assumptions of Theorem~\ref{thm:dmp}, $u_0$ satisfies the DMP
\begin{equation}
f(\bx) \le 0, \quad \forall \; \bx \in \Omega \quad\Longrightarrow \quad \max_{1\le i\le N_0} u_{0,i} \le
\max\{0, \, \max_{1\le i\le N_b^{\partial}} u_{b,i}^{\partial} \}.
\label{eq:dmp-3}
\end{equation}
Therefore, the weak Galerkin approximation (\ref{eq:uh}) satisfies the DMP
\begin{equation}
f(\bx) \le 0, \quad \forall\; \bx \in \Omega \quad\Longrightarrow \quad
\max\limits_{\stackrel{\bx \in \Omega}{\bx \text{ is not a vertex}} } u_h(\bx) 
\le \max\{0, \, \max_{1\le i\le N_b^{\partial}} u_{b,i}^{\partial} \} ,
\label{eq:dmp-4}
\end{equation}
where the values of $u_h(\bx)$ on vertices are excluded because 
$u_h(\bx)$ assumes multiple values on each vertex due to the discontinuity nature of the weak Galerkin approximation.
\end{theorem}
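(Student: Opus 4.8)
The plan is to obtain the element values $\bu_0$ directly from the first block row of the \emph{full} system (\ref{eq:matrixform}), which was never eliminated, and to show that each $u_{0,i}$ is a convex combination of the edge values on the three edges of the corresponding triangle, shifted downward by a nonpositive source term. Since $M_{0,0}$ is diagonal with positive diagonal entries (Lemma~\ref{lem:M00}), the first block row gives
$$
\bu_0 = M_{0,0}^{-1}\left(\bF_0 - M_{0,b}\bu_b - M_{0,b}^{\partial}\bu_b^{\partial}\right).
$$
By the sparsity recorded in Remark~\ref{rem:basis0b}, the $i^{\text{th}}$ component is a scalar relation involving only the three edges of the $i^{\text{th}}$ triangle $K$,
$$
u_{0,i} = \frac{F_{0,i}}{M_{0,0}(i,i)} + \sum_{e_j\in\partial K}\frac{-M_{0,b}(i,j)}{M_{0,0}(i,i)}\, u_{b,j},
$$
with the convention that a boundary edge of $K$ contributes $-M_{0,b}^{\partial}(i,j)/M_{0,0}(i,i)$ times $u_{b,j}^{\partial}$.

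The two facts I would need are that the coefficients $-M_{0,b}(i,j)/M_{0,0}(i,i)$ are nonnegative and that they sum to one. Nonnegativity is already in hand: in the proof of Theorem~\ref{thm:dmp} the mesh condition (\ref{eq:dmp-conditionMbb}) was used precisely to force all entries of $M_{b,0}=M_{0,b}^T$ (and, by the identical formula in Lemma~\ref{lem:Mb0}, of $M_{0,b}^{\partial}$) to be nonpositive, while $M_{0,0}(i,i)>0$. For the sum, I would use Lemmas~\ref{lem:M00} and \ref{lem:Mb0} to compute
$$
\sum_{e_j\in\partial K} M_{0,b}(i,j)
= -C_K^2\|\bx-\bx_K\|_{\cA,K}^2
- \frac{C_K}{|K|}\Big(\cA(\bx-\bx_K),\, \textstyle\sum_{e_j\in\partial K}|e_j|\bn_{j,K}\Big)_K,
$$
and then invoke the identity (\ref{eq:sumen}), namely $\sum_{e_j\in\partial K}|e_j|\bn_{j,K}=\mathbf{0}$, to annihilate the second term. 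What survives is exactly $-M_{0,0}(i,i)$, so the coefficients sum to one. This is the same vanishing-row-sum mechanism used in Lemma~\ref{lem:rowsum}, now applied to the element rows of $M$ rather than the edge rows of $\bar{A}$.

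With these two facts, $u_{0,i}$ is a genuine convex combination of the edge values on $\partial K$, plus the term $F_{0,i}/M_{0,0}(i,i)$, which is nonpositive whenever $f\le 0$ because $\bF_0\le 0$ (as noted in the proof of Theorem~\ref{thm:dmp}) and $M_{0,0}(i,i)>0$. Hence $u_{0,i}$ is bounded above by the largest edge value on $\partial K$. Bounding the interior-edge values among these by Theorem~\ref{thm:dmp} and the boundary-edge values trivially by $\max\{0,\max_i u_{b,i}^{\partial}\}$, then taking the maximum over $i$, yields (\ref{eq:dmp-3}). Finally, since at any non-vertex point $u_h$ equals one of the numbers $u_{0,i}$, $u_{b,i}$, or $u_{b,i}^{\partial}$, combining (\ref{eq:dmp-3}) with the edge bound (\ref{eq:dmp-1}) from Theorem~\ref{thm:dmp} and the trivial boundary bound gives (\ref{eq:dmp-4}). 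I do not anticipate a real obstacle here; the only step requiring genuine care is the convex-combination observation, and specifically the verification that the element row sum vanishes through (\ref{eq:sumen}).
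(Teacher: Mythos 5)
Your proposal is correct and follows essentially the same route as the paper's own proof: solve the first block row of (\ref{eq:matrixform}) for $\bu_0$, use the mesh condition (\ref{eq:dmp-conditionMbb}) to get nonpositivity of the $M_{0,b}$ and $M_{0,b}^{\partial}$ entries, and use the identity (\ref{eq:sumen}) to show the coefficients sum to $M_{0,0}(i,i)$, so that $u_{0,i}$ is a convex combination of edge values plus a nonpositive source contribution. The paper phrases this as a chain of inequalities rather than explicitly as a convex combination, but the mechanism is identical.
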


\begin{proof}
From  (\ref{eq:matrixform}), we have
\[
\bu_{0} = M_{0,0}^{-1} \left (\bF_0 - M_{0,b} \bu_b
- M_{0,b}^{\partial} \bu_b^{\partial}\right ) .
\]
From Lemma~\ref{lem:Mb0}, (\ref{eq:dmp-conditionMbb}) implies $M_{0,b}(i,j) \le 0$ and
$M_{0,b}^{\partial}(i,j) \le 0$. Moreover, $f(\bx) \le 0$ means $F_{0, i} \le 0$. Thus, letting $K$ be the $i^{\text{th}}$
element, from Theorem~\ref{thm:dmp} we have
\begin{align*}
u_{0, i} & = \frac{1}{M_{0,0}(i,i)} \left ( F_{0, i} - \sum_{e_j \in \partial K} M_{0,b}(i, j) u_{b, j} 
 - \sum_{e_j^{\partial} \in \partial K} M_{0,b}^{\partial}(i, j) u_{b, j}^{\partial} \right ) \\
&\le \frac{1}{M_{0,0}(i,i)} \left ( \sum_{e_j \in \partial K} (-M_{0,b}(i, j)) \cdot \max_{1\le k \le N_b} u_{b, k} 
 + \sum_{e_j^{\partial} \in \partial K} (- M_{0,b}^{\partial}(i, j)) \cdot \max_{1\le k \le N_b^{\partial}} u_{b, k}^{\partial} \right ) \\
 & \le \frac{1}{M_{0,0}(i,i)} \left ( \sum_{e_j \in \partial K} (- M_{0,b}(i, j) )
 + \sum_{e_j^{\partial} \in \partial K} (- M_{0,b}^{\partial}(i, j) ) \right ) 
 \max\{0, \, \max_{1\le k\le N_b^{\partial}} u_{b,k}^{\partial} \} .
\end{align*}
From Lemmas \ref{lem:M00} and \ref{lem:Mb0} and the identity (\ref{eq:sumen}), we get
\begin{align*}
u_{0, i} & \le \frac{\max\{0, \, \max_{1\le k\le N_b^{\partial}} u_{b,k}^{\partial} \} }
{C_K \|\bx - \bx_K\|_{\cA,K}^2} \sum_{e_j \in \partial K} \left (
\frac{1}{3}C_K \|\bx - \bx_K\|_{\cA,K}^2
+   \frac{|e_j|}{|K|} \left(\cA (\bx - \bx_K),\, \bn_{j,K} \right)_K \right )
\\ 
& = \frac{\max\{0, \, \max_{1\le k\le N_b^{\partial}} u_{b,k}^{\partial} \} }
{C_K \|\bx - \bx_K\|_{\cA,K}^2} \left ( C_K \|\bx - \bx_K\|_{\cA,K}^2
+   \frac{1}{|K|} (\cA (\bx - \bx_K),\, \sum_{e_j \in \partial K} |e_j| \bn_{j,K} )_K \right )
\\
& = \frac{\max\{0, \, \max_{1\le k\le N_b^{\partial}} u_{b,k}^{\partial} \} }
{C_K \|\bx - \bx_K\|_{\cA,K}^2} \left ( C_K \|\bx - \bx_K\|_{\cA,K}^2
+   0\frac{}{} \right ) = \max\{0, \, \max_{1\le k\le N_b^{\partial}} u_{b,k}^{\partial} \},
\end{align*}
which implies (\ref{eq:dmp-3}).
Combining this with Theorem~\ref{thm:dmp} gives (\ref{eq:dmp-4}).
\end{proof}

\begin{remark} \label{rem:numericalintegration}
In actual computation, the $L^2$ inner-products on a triangle $K$ involved in the weak Galerkin approximation
(\ref{eq:SchurComplement}) are typically calculated using quadrature rules.
Since most of these quadrature rules still define an inner-product on polynomial functions, the
above analysis as well as those to be given below in  \S\ref{sec:pwconstant} and \S\ref{sec:general}
can be extended to the situation with numerical integration.
In this case, we need to replace the $L^2$ inner-products in the analysis by the discrete $L^2$ inner-product
associated with the quadrature rule and to require that the discrete inner-product satisfy
condition (\ref{eq:basis0}), which is true as long as the quadrature is exact for linear polynomials.
\qed
\end{remark}

Next we look into the conditions in Theorem \ref{thm:dmp} in more detail.
Let $\cA_K$ be the average of $\cA$ over $K$, i.e.,
\[
\cA_K = \frac{1}{|K|} \int_K \cA \,d\bx .
\]
Then we can rewrite the left-hand side of (\ref{eq:dmp-conditionMbb-2}) as
\[
(\cA\bn_{i,K},\, \bn_{j,K})_K
= |K|\;  \bn_{j,K}^T \cA_K \bn_{i,K} .
\]
Denote the unit directions (with the vertices of $K$ being ordered counterclockwisely)
along edges $e_i$ and $e_j$ by $\be_i$ and $\be_j$, respectively. By direct calculation one has
\[
\bn_{j,K}^T \cA_K \bn_{i,K} = \det(\cA_K) \; \be_j^T \cA_K^{-1} \be_i .
\]
Denote by $\alpha_{i,j,\cA_K^{-1}}$ the angle (in $K$) formed by $e_i$ and $e_j$
and measured in the metric specified by $\cA_K^{-1}$. By definition, we have
\[
\cos\left ( \alpha_{i,j,\cA_K^{-1}}\right ) = - \frac{\be_j^T \cA_K^{-1} \be_i}{\sqrt{\be_i^T \cA_K^{-1} \be_i} \cdot 
\sqrt{\be_j^T \cA_K^{-1} \be_j}} .
\]
Combining the above results, we get
\begin{align}
(\cA\bn_{i,K},\, \bn_{j,K})_K
= - |K|\, \det(\cA_K)\, \cos\left ( \alpha_{i,j,\cA_K^{-1}}\right )  
\sqrt{\be_i^T \cA_K^{-1} \be_i} \cdot  \sqrt{\be_j^T \cA_K^{-1} \be_j} .
\label{eq:alpha}
\end{align}
From this, we can see that the statements that $(\cA\bn_{i,K},\, \bn_{j,K})_K \le 0$ and
the angle $\alpha_{i,j,\cA_K^{-1}}$ is nonobtuse are equivalent.

It is noted that the conditions (\ref{eq:dmp-conditionMbb-2}) and (\ref{eq:dmp-conditionMbb})
can be simplified significantly when $\cA$ is piecewise constant on $\T_h$.
For this reason, we study this situation first in the following and discuss the general situation afterward.

\subsection{The case with piecewise constant $\cA$}
\label{sec:pwconstant}

For this case, from (\ref{eq:basis0}) the conditions (\ref{eq:dmp-conditionMbb-2}) and (\ref{eq:dmp-conditionMbb})
reduce to
\begin{align*}
& (\cA\bn_{i,K},\, \bn_{j,K})_K \le
\frac{(\cA(\bx-\bx_K),\, \bn_{i,K})_K\, (\cA(\bx-\bx_K),\, \bn_{j,K})_K}{\|\bx-\bx_K\|_{\cA,K}^2} = 0,
\notag \\
& \qquad \qquad \qquad \qquad \qquad \qquad  \qquad \qquad \qquad \qquad
\forall\; K\in \T_h, \quad e_i, e_j \in \partial K, \quad e_i \neq e_j,
\\
& |(\cA(\bx-\bx_K),\, \bn_{i,K})_K | = 0 \le \frac{C_K |K|}{3|e_i|} \| \bx-\bx_K\|_{\cA,K}^2,\quad
\forall\; K\in \T_h, \quad e_i \in \partial K.
\end{align*}
Thus, (\ref{eq:dmp-conditionMbb}) is satisfied automatically. Moreover, from (\ref{eq:alpha}) one can see
that (\ref{eq:dmp-conditionMbb-2}) is true if all the angles of the mesh are nonobtuse when measured
in the metric specified by $\cA_K^{-1}$. Combining this with Theorem~\ref{thm:dmp-u0}, we have the following
theorem.

\begin{theorem}
\label{thm:pwconstant}
If $\cA$ is piecewise constant on $\T_h$ and all of the mesh angles are nonobtuse when measured
in the metric specified by $\cA_K^{-1}$, the weak Galerkin approximation defined in (\ref{eq:wg}) and
(\ref{eq:uh}) satisfies the DMP (\ref{eq:dmp-4}).
\end{theorem}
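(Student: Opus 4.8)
The plan is to specialize the two mesh conditions of Theorem~\ref{thm:dmp} to the piecewise-constant case and verify that the stated nonobtuse-angle hypothesis makes both of them hold; then invoke Theorem~\ref{thm:dmp-u0} to obtain the DMP~(\ref{eq:dmp-4}). The crucial simplification is the identity~(\ref{eq:basis0}), which states that $\bx - \bx_K$ is $L^2$-orthogonal over $K$ to every constant vector. When $\cA$ is constant on $K$, the vector $\cA\bn_{i,K}$ is itself a constant vector on $K$, so the inner product $(\cA(\bx-\bx_K),\,\bn_{i,K})_K = (\bx-\bx_K,\,\cA\bn_{i,K})_K$ vanishes by~(\ref{eq:basis0}). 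This single observation collapses the right-hand side of~(\ref{eq:dmp-conditionMbb-2}) to zero and makes the left-hand side of~(\ref{eq:dmp-conditionMbb}) equal to zero as well.

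First I would record that, with $\cA$ piecewise constant, condition~(\ref{eq:dmp-conditionMbb}) becomes $0 \le \tfrac{C_K|K|}{3|e_i|}\|\bx-\bx_K\|_{\cA,K}^2$, which holds trivially since $C_K > 0$ by~(\ref{eq:ck}) and $\|\bx-\bx_K\|_{\cA,K}^2 > 0$ by the positive definiteness of $\cA$. Thus the condition governing the sign of the entries of $M_{b,0}$ (and hence~(\ref{eq:F-1})) is satisfied automatically, independent of any angle requirement. Next I would observe that condition~(\ref{eq:dmp-conditionMbb-2}) reduces to the requirement $(\cA\bn_{i,K},\,\bn_{j,K})_K \le 0$ for every pair of distinct edges $e_i, e_j$ of every element $K$. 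This is exactly the sign condition needed to guarantee that the off-diagonal entries of $\bar{A}$, given by~(\ref{eq:Aij}) and~(\ref{eq:Aij-2}), are nonpositive.

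The final step is to translate the inequality $(\cA\bn_{i,K},\,\bn_{j,K})_K \le 0$ into the geometric nonobtuse-angle statement. Here I would appeal directly to the identity~(\ref{eq:alpha}), which was derived precisely to express this inner product as a negative multiple of $\cos(\alpha_{i,j,\cA_K^{-1}})$ times strictly positive factors $|K|$, $\det(\cA_K)$, $\sqrt{\be_i^T\cA_K^{-1}\be_i}$, and $\sqrt{\be_j^T\cA_K^{-1}\be_j}$. Consequently $(\cA\bn_{i,K},\,\bn_{j,K})_K \le 0$ is equivalent to $\cos(\alpha_{i,j,\cA_K^{-1}}) \ge 0$, i.e.\ to the angle $\alpha_{i,j,\cA_K^{-1}}$ being nonobtuse. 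Since the hypothesis assumes every mesh angle is nonobtuse in the $\cA_K^{-1}$ metric, both conditions of Theorem~\ref{thm:dmp} hold, and the conclusion follows from Theorem~\ref{thm:dmp-u0}.

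I expect no genuine obstacle here, as the theorem is essentially a corollary: all the analytic work has already been front-loaded into~(\ref{eq:basis0}), the entry formulas of Lemma~\ref{lem:Aelem}, and the trigonometric reformulation~(\ref{eq:alpha}). The only point requiring a little care is confirming that, because $\cA_K = \cA|_K$ when $\cA$ is constant on $K$, the angle measured with the elementwise average $\cA_K^{-1}$ is genuinely the metric angle induced by the local diffusion tensor, so that the geometric hypothesis is the natural and checkable one. Everything else is direct substitution into results already established in the excerpt.
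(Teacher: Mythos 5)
Your proposal is correct and follows essentially the same route as the paper: the identity (\ref{eq:basis0}) kills the terms involving $(\cA(\bx-\bx_K),\,\bn_{i,K})_K$ when $\cA$ is constant on $K$, so (\ref{eq:dmp-conditionMbb}) holds automatically and (\ref{eq:dmp-conditionMbb-2}) reduces to $(\cA\bn_{i,K},\,\bn_{j,K})_K \le 0$, which (\ref{eq:alpha}) identifies with the nonobtuse-angle condition in the $\cA_K^{-1}$ metric; Theorem~\ref{thm:dmp-u0} then gives the DMP. No discrepancies with the paper's own argument.
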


\begin{remark}
The mesh condition in Theorem \ref{thm:pwconstant} is referred to as the anisotropic
nonobtuse angle condition by Li and Huang \cite{LH10}, a generalization of the well-known
nonobtuse angle condition \cite{BKK08, CR73} to the case with a general anisotropic diffusion matrix.
They show that the P1 conforming FE approximation
to BVP (\ref{eq:pde}) satisfies a DMP when the mesh condition holds.
Like the isotropic diffusion case \cite{Let92, SF73, XZ99}, it is also shown in \cite{Hua11}
that the condition can be replaced by a weaker, Delaunay-type mesh condition in two dimensions.
Unfortunately, this may not be true for the weak Galerkin approximation.
This is because in the P1 conforming FE approximation, basis functions are associated with
vertices and the support of basis functions associated with any pair of neighboring vertices
can overlap over two triangles. It is this two-triangle overlap that leads to a weaker condition in two dimensions.
On the other hand, the system (\ref{eq:SchurComplement}) involves basis functions associated with edges
and the support of basis functions based on any pair of neighboring edges
overlaps over at most a triangle, which unlikely leads to a weaker mesh condition.
\qed
\end{remark}


\subsection{The case with a general anisotropic matrix $\cA$}
\label{sec:general}

The general case is considered as a perturbation of the piecewise constant case. Define
\[
\lambda_{\min, K}(\cA) = \min\limits_{\bx \in K} \lambda_{\min} (\cA(\bx)),
\]
where $\lambda_{\min} (\cA(\bx))$ denotes the minimal eigenvalue of $\cA(\bx)$.
We assume that $\cA$ is Lipschitz continuous on each element, i.e., for any $K \in \T_h$, there exists
a constant $L_{\cA, K}$ such that
\[
|\cA(\bx) - \cA(\by)| \le L_{\cA, K} |\bx - \by|, \quad \forall \, \bx, \by \in K .
\]
Then, by the mean value theorem we have
$$
|\cA(\bx) - \cA_K| \le L_{\cA, K} h_K, \qquad \forall\,  \bx \in K.
$$

\begin{theorem}
Assume that $\cA$ is Lipschitz continuous on each element of $\T_h$. If the mesh satisfies
\begin{align}
&  \frac{L_{\cA, K}^2 h_K^2}{\lambda_{\min, K}^2(\cA)} \le \cos\left ( \alpha_{i,j,\cA_K^{-1}} \right ),
\quad \forall e_i, e_j \in \partial K,\quad e_i \ne e_j, \quad \forall K \in \T_h
\label{eq:thm:general-1}
\\
& \frac{h_K^3}{|K|} \le \frac{2 \lambda_{\min, K}(\cA)}{3 L_{\cA, K}},
\quad \forall K \in \T_h
\label{eq:thm:general-2}
\end{align}
then the weak Galerkin approximation defined in (\ref{eq:wg}) and
(\ref{eq:uh}) satisfies the DMP (\ref{eq:dmp-4}).
\label{thm:general}
\end{theorem}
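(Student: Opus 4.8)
The plan is to verify that the two mesh conditions \eqref{eq:thm:general-1} and \eqref{eq:thm:general-2} imply the hypotheses \eqref{eq:dmp-conditionMbb-2} and \eqref{eq:dmp-conditionMbb} of Theorem~\ref{thm:dmp}; the conclusion \eqref{eq:dmp-4} then follows at once from Theorem~\ref{thm:dmp-u0}. Throughout I would treat $\cA$ as a perturbation of its elementwise average $\cA_K$, the key being the vanishing-moment identity \eqref{eq:basis0}: since $\int_K(\bx-\bx_K)\,d\bx=\vzero$, the constant matrix $\cA_K$ contributes nothing to the off-diagonal terms, so that
\[
(\cA(\bx-\bx_K),\, \bn_{i,K})_K = ((\cA-\cA_K)(\bx-\bx_K),\, \bn_{i,K})_K .
\]
Using $|\cA(\bx)-\cA_K|\le L_{\cA,K}h_K$ and $|\bn_{i,K}|=1$, I would record two estimates for this quantity: a crude bound $|(\cA(\bx-\bx_K),\bn_{i,K})_K|\le L_{\cA,K}h_K^2|K|$ (from $|\bx-\bx_K|\le h_K$), and a sharper Cauchy--Schwarz bound $|(\cA(\bx-\bx_K),\bn_{i,K})_K|\le L_{\cA,K}h_K\,|K|^{1/2}\,\|\bx-\bx_K\|_K$.

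For \eqref{eq:dmp-conditionMbb} I would bound its right-hand side from below: by \eqref{eq:ck} and $\|\bx-\bx_K\|_{\cA,K}^2\ge\lambda_{\min,K}(\cA)\|\bx-\bx_K\|_K^2$ the factor $\|\bx-\bx_K\|_K^2$ cancels against $C_K$, leaving $\frac{2|K|^2\lambda_{\min,K}(\cA)}{3|e_i|}$. Inserting $|e_i|\le h_K$ and the crude left-hand bound reduces the required inequality to $\frac{h_K^3}{|K|}\le\frac{2\lambda_{\min,K}(\cA)}{3L_{\cA,K}}$, which is exactly \eqref{eq:thm:general-2}.

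The substantive estimate is \eqref{eq:dmp-conditionMbb-2}. Abbreviating $a_i=(\cA(\bx-\bx_K),\bn_{i,K})_K$, $a_j=(\cA(\bx-\bx_K),\bn_{j,K})_K$, and $D=\|\bx-\bx_K\|_{\cA,K}^2>0$, I would exploit $\frac{a_ia_j}{D}\ge-\frac{|a_i||a_j|}{D}$ so that it suffices to prove the single inequality $(\cA\bn_{i,K},\bn_{j,K})_K\le-\frac{|a_i||a_j|}{D}$, thereby avoiding any case-split on the sign of $a_ia_j$. For the right-hand side, the Cauchy--Schwarz bound together with $D\ge\lambda_{\min,K}(\cA)\|\bx-\bx_K\|_K^2$ cancels the common factor $\|\bx-\bx_K\|_K^2$ and gives $\frac{|a_i||a_j|}{D}\le\frac{L_{\cA,K}^2h_K^2|K|}{\lambda_{\min,K}(\cA)}$. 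For the left-hand side I would use the metric form \eqref{eq:alpha}; since \eqref{eq:thm:general-1} forces $\cos(\alpha_{i,j,\cA_K^{-1}})\ge0$, chaining $\be_i^T\cA_K^{-1}\be_i\ge\lambda_{\max}(\cA_K)^{-1}$, $\det(\cA_K)=\lambda_{\max}(\cA_K)\lambda_{\min}(\cA_K)$, and $\lambda_{\min}(\cA_K)\ge\lambda_{\min,K}(\cA)$ yields $-(\cA\bn_{i,K},\bn_{j,K})_K\ge|K|\lambda_{\min,K}(\cA)\cos(\alpha_{i,j,\cA_K^{-1}})$. Combining the two sides, the claim holds whenever $|K|\lambda_{\min,K}(\cA)\cos(\alpha_{i,j,\cA_K^{-1}})\ge\frac{L_{\cA,K}^2h_K^2|K|}{\lambda_{\min,K}(\cA)}$, i.e.\ precisely \eqref{eq:thm:general-1}.

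I expect the main difficulty to lie in the left-hand estimate of \eqref{eq:dmp-conditionMbb-2}: one must use the Cauchy--Schwarz (rather than $L^\infty$) bound on $a_i,a_j$ so that the $\|\bx-\bx_K\|_K^2$ factors cancel against $D$, and must track the three spectral inequalities so that $-(\cA\bn_{i,K},\bn_{j,K})_K$ is converted into the clean form $|K|\lambda_{\min,K}(\cA)\cos(\alpha_{i,j,\cA_K^{-1}})$ without a stray $\lambda_{\max}$. Keeping all inequality directions consistent with the (nonnegative) sign of $\cos(\alpha_{i,j,\cA_K^{-1}})$ is the only delicate point; the remaining manipulations are routine.
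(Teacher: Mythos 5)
Your proposal is correct and follows essentially the same route as the paper's proof: reduce to the hypotheses of Theorem~\ref{thm:dmp}, subtract $\cA_K$ via \eqref{eq:basis0}, bound the perturbation terms with the Lipschitz constant and Cauchy--Schwarz so the $\|\bx-\bx_K\|_K^2$ factors cancel, and convert $(\cA\bn_{i,K},\bn_{j,K})_K$ through \eqref{eq:alpha} and the spectral bounds into $-|K|\lambda_{\min,K}(\cA)\cos(\alpha_{i,j,\cA_K^{-1}})$. The only cosmetic difference is that you observe up front that \eqref{eq:thm:general-1} forces the angle to be nonobtuse, where the paper simply assumes it; both arguments are otherwise identical.
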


\begin{proof}
We first consider the condition (\ref{eq:dmp-conditionMbb-2}). Notice that $\det(\cA_K) = \lambda_{\max}(\cA_K)
\lambda_{\min}(\cA_K)$ and
\[
\be_i^T \cA_K^{-1} \be_i \ge \frac{\be_i^T \be_i}{\lambda_{\max}(\cA_K)} = \frac{1}{\lambda_{\max}(\cA_K)} ,
\qquad \be_j^T \cA_K^{-1} \be_j \ge \frac{1}{\lambda_{\max}(\cA_K)} .
\]
Assuming that $\alpha_{i,j,\cA_K^{-1}}$ is nonobtuse, from (\ref{eq:alpha}) we have
\begin{align}
(\cA\bn_{i,K},\, \bn_{j,K})_K
& = - |K|\, \lambda_{\max}(\cA_K) \lambda_{\min}(\cA_K)\, \cos\left ( \alpha_{i,j,\cA_K^{-1}}\right )  
\sqrt{\be_i^T \cA_K^{-1} \be_i} \cdot  \sqrt{\be_j^T \cA_K^{-1} \be_j} .
\notag \\
& \le - \frac{|K|\, \lambda_{\max}(\cA_K) \lambda_{\min}(\cA_K)\, \cos\left ( \alpha_{i,j,\cA_K^{-1}}\right )  }
{\lambda_{\max}(\cA_K) }
\notag \\
& \le - |K|\, \lambda_{\min, K}(\cA) \, \cos\left ( \alpha_{i,j,\cA_K^{-1}}\right )  .
\label{eq:thm:general-3}
\end{align}
For the right-hand side of (\ref{eq:dmp-conditionMbb-2}), we have
\begin{align}
& \frac{(\cA(\bx-\bx_K),\, \bn_{i,K})_K\, (\cA(\bx-\bx_K),\, \bn_{j,K})_K}{\|\bx-\bx_K\|_{\cA,K}^2}
\notag \\
& = \frac{((\cA-\cA_K) (\bx-\bx_K),\, \bn_{i,K})_K\, ( (\cA-\cA_K)(\bx-\bx_K),\, \bn_{j,K})_K}{\|\bx-\bx_K\|_{\cA,K}^2}
\notag \\
& \ge - \frac{|((\cA-\cA_K) (\bx-\bx_K),\, \bn_{i,K})_K\, ( (\cA-\cA_K)(\bx-\bx_K),\, \bn_{j,K})_K|}{\|\bx-\bx_K\|_{\cA,K}^2}
\notag \\
&\ge - \frac{ \left(\int_K |(\cA-\cA_K) (\bx-\bx_K)|\, d\bx \right)^2 }{\|\bx-\bx_K\|_{\cA,  K}^2}
\notag \\
&\ge - \frac{ L_{\cA, K}^2 h_K^2 \left(\int_K |\bx-\bx_K|\, d\bx\right)^2 }{\lambda_{\min, K}(\cA) \|\bx-\bx_K\|_{K}^2}
\notag \\
&\ge - \frac{ L_{\cA, K}^2 |K|\, h_K^2}{\lambda_{\min, K}(\cA)} . 
\notag
\end{align}
From this and (\ref{eq:thm:general-3}), we know that (\ref{eq:dmp-conditionMbb-2}) is true
when (\ref{eq:thm:general-1}) holds.

We now consider the condition (\ref{eq:dmp-conditionMbb}). For the left-hand side, we have
\begin{align}
|(\cA(\bx-\bx_K),\, \bn_{i,K})_K | 
 & = |((\cA-\cA_K) (\bx-\bx_K),\, \bn_{i,K})_K |
 \notag \\
 & \le L_{\cA,K} h_K \int_K |\bx-\bx_K|\, d\bx 
 \notag \\
& \le L_{\cA,K} h_K^2 |K| .
\label{eq:thm:general-4}
\end{align}
For the right-hand side, we get
\[
\frac{C_K |K|}{3|e_i|} \| \bx-\bx_K\|_{\cA,K}^2 \ge \frac{C_K |K|}{3|e_i|} \lambda_{\min, K}(\cA) \| \bx-\bx_K\|_{K}^2 
= \frac{2\lambda_{\min, K}(\cA) |K|^2}{3|e_i|} \ge \frac{2\lambda_{\min, K}(\cA) |K|^2}{3 h_K} .
\]
From this and (\ref{eq:thm:general-4}), we know that (\ref{eq:dmp-conditionMbb}) is true
when (\ref{eq:thm:general-2}) holds.

The conclusion is then drawn from Theorem~\ref{thm:dmp-u0}.
\end{proof}

\begin{remark}
When $\cA$ is piecewise constant on $\T_h$, we will have $L_{\cA, K} = 0$ for all $K \in \T_h$.
It is easy to see that Theorem~\ref{thm:general} reduces to Theorem~\ref{thm:pwconstant} in this case.
\qed
\end{remark}

\begin{remark}
The mesh condition (\ref{eq:thm:general-1}) requires that the mesh be $\mathcal{O}(h^2)$-acute, i.e.,
all of the mesh angles, measured in the metric specified by $\cA_K^{-1}$,
are $\mathcal{O}(h^2)$ away from being the right angle.
On the other hand, the mesh condition (\ref{eq:thm:general-2}) is less restrictive, which can be satisfied
as long as the mesh is sufficiently fine and the elements are not very skew.
\qed
\end{remark}

\section{Numerical Results}
\label{sec:numerical}

In this section we present some numerical results to illustrate the theoretical analysis in the previous sections.

\begin{exam}
\label{exam5.1}
The first test problem is in the form (\ref{eq:pde}) with $\Omega = (0,16)\times (0,16)$,
$$
\begin{aligned}
\cA &= \begin{bmatrix}500.5 & 499.5\\ 499.5 & 500.5\end{bmatrix} ,\quad
f = 0,\quad \textrm{ and } \quad g = \begin{cases}1, &\textrm{for } 0\le x\le 14,\, y=16 \\
                              8-0.5x, \quad &\textrm{for } 14<x<16,\, y=16 \\
                              1, &\textrm{for } x = 0,\, 2\le y\le 16 \\
                              0.5y, &\textrm{for }x = 0,\, 0<y<2 \\
                              0, &\textrm{otherwise}.
\end{cases} 
\end{aligned}
$$
This example has been studied in \cite{Hua11,LH10}. Notice that the diffusion coefficient matrix is constant on $\Omega$.
We solve this problem using the weak Galerkin method
on three types of mesh as shown in Fig.~\ref{fig:T1meshes}.
Among them, {\tt mesh45} and {\tt mesh90} satisfy the mesh conditions in Theorem~\ref{thm:pwconstant}
whereas {\tt mesh135} does not.
The maximum and minimum values of both ${u}_b$ and $u_0$
are reported in Table \ref{tab:T1maxmin}. These results confirm the theoretical 
predictions in Theorem~\ref{thm:pwconstant}: both $u_b$ and $u_0$ obtained with
{\tt mesh45} and {\tt mesh90} remain within the range between 0 and 1 but those obtained
with {\tt mesh135} have undershoots and overshoots. 
Contour plots, drawn using the average values at vertices, are shown in Fig.~\ref{fig:T1contour}. 
They are consistent with the above observation.
\qed
\end{exam}

\begin{figure}
  \begin{center}
    \includegraphics[width=5cm]{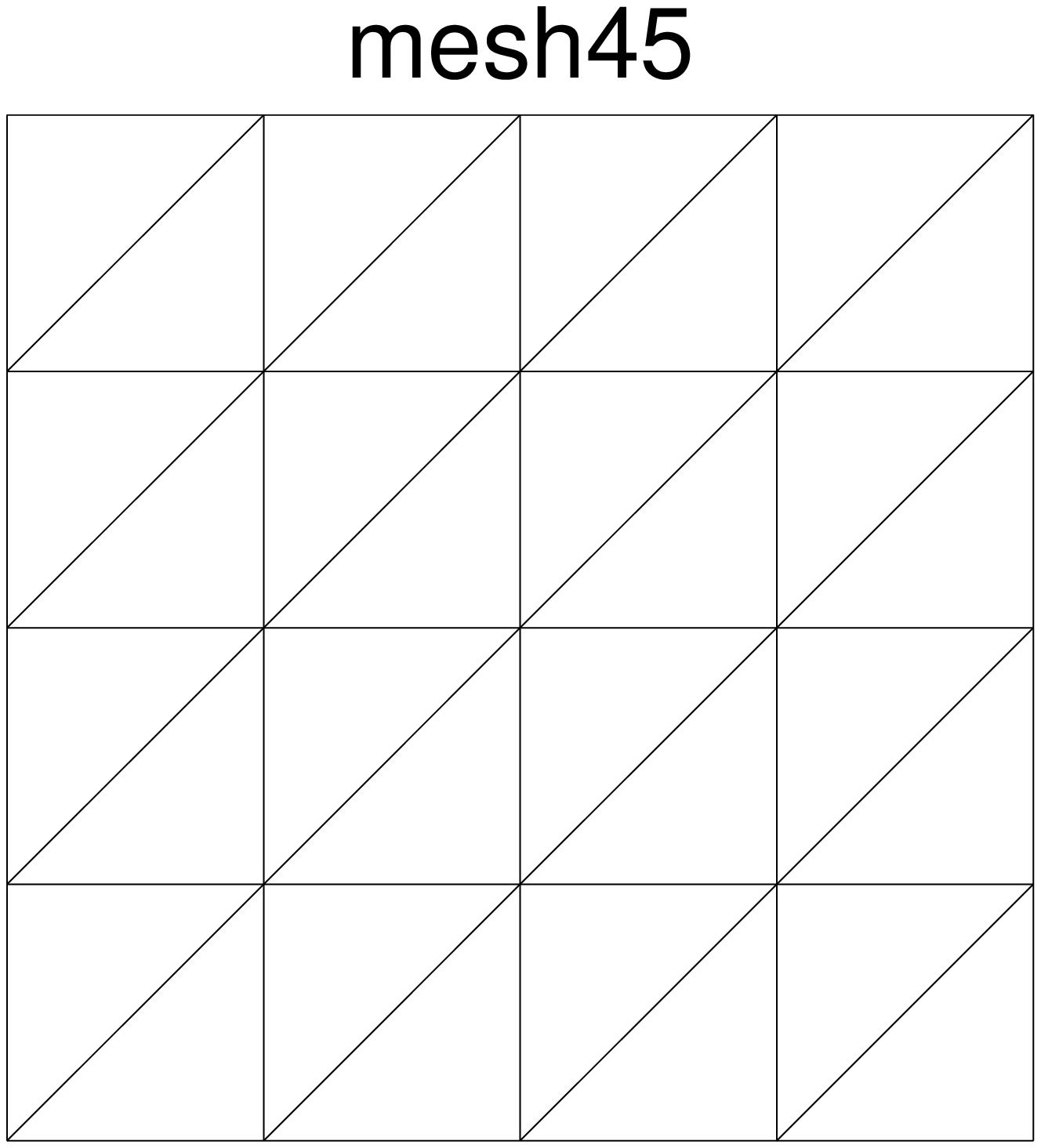} \hspace{-0.2cm}
    \includegraphics[width=5cm]{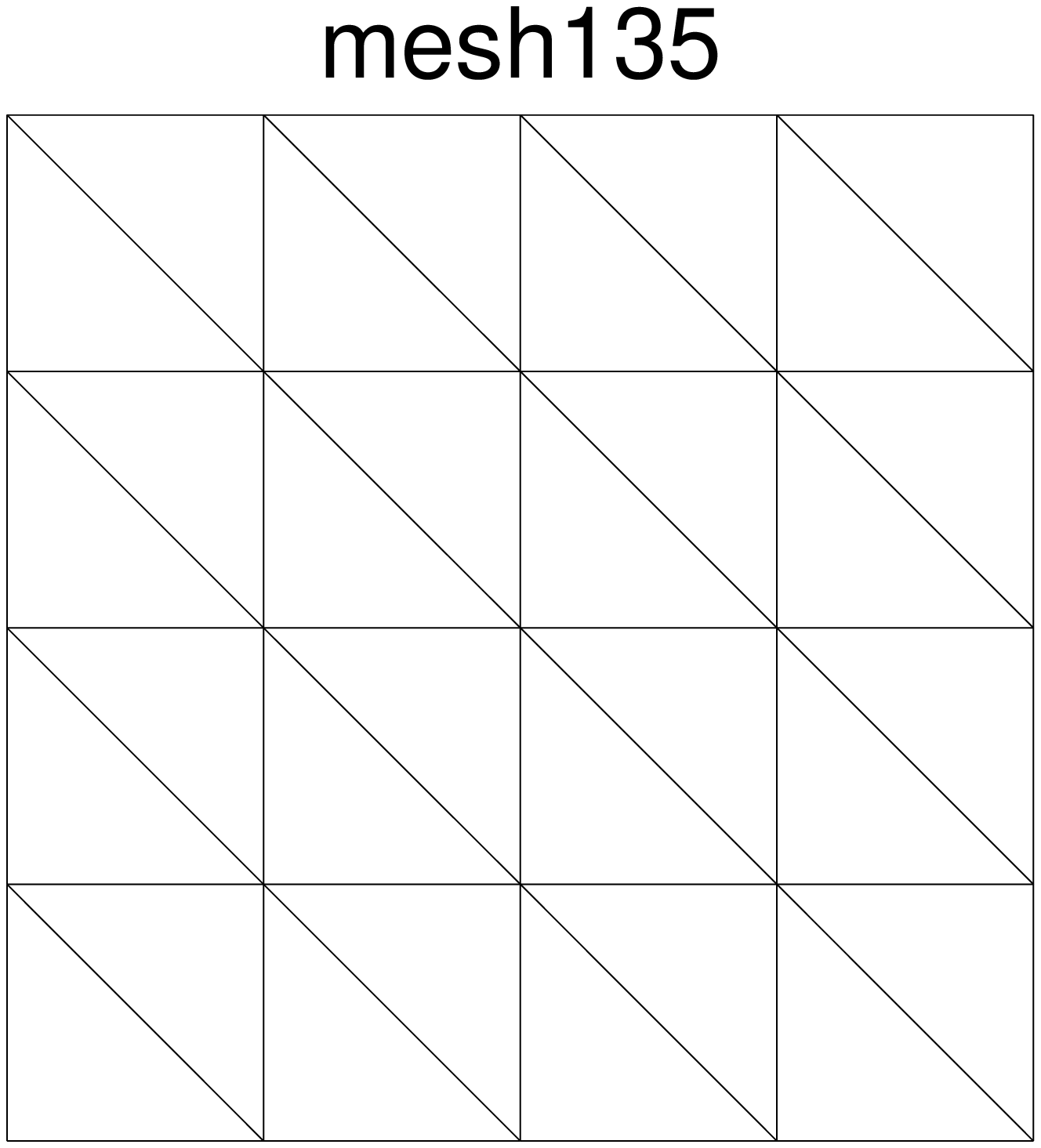} \hspace{-0.2cm}
    \includegraphics[width=5cm]{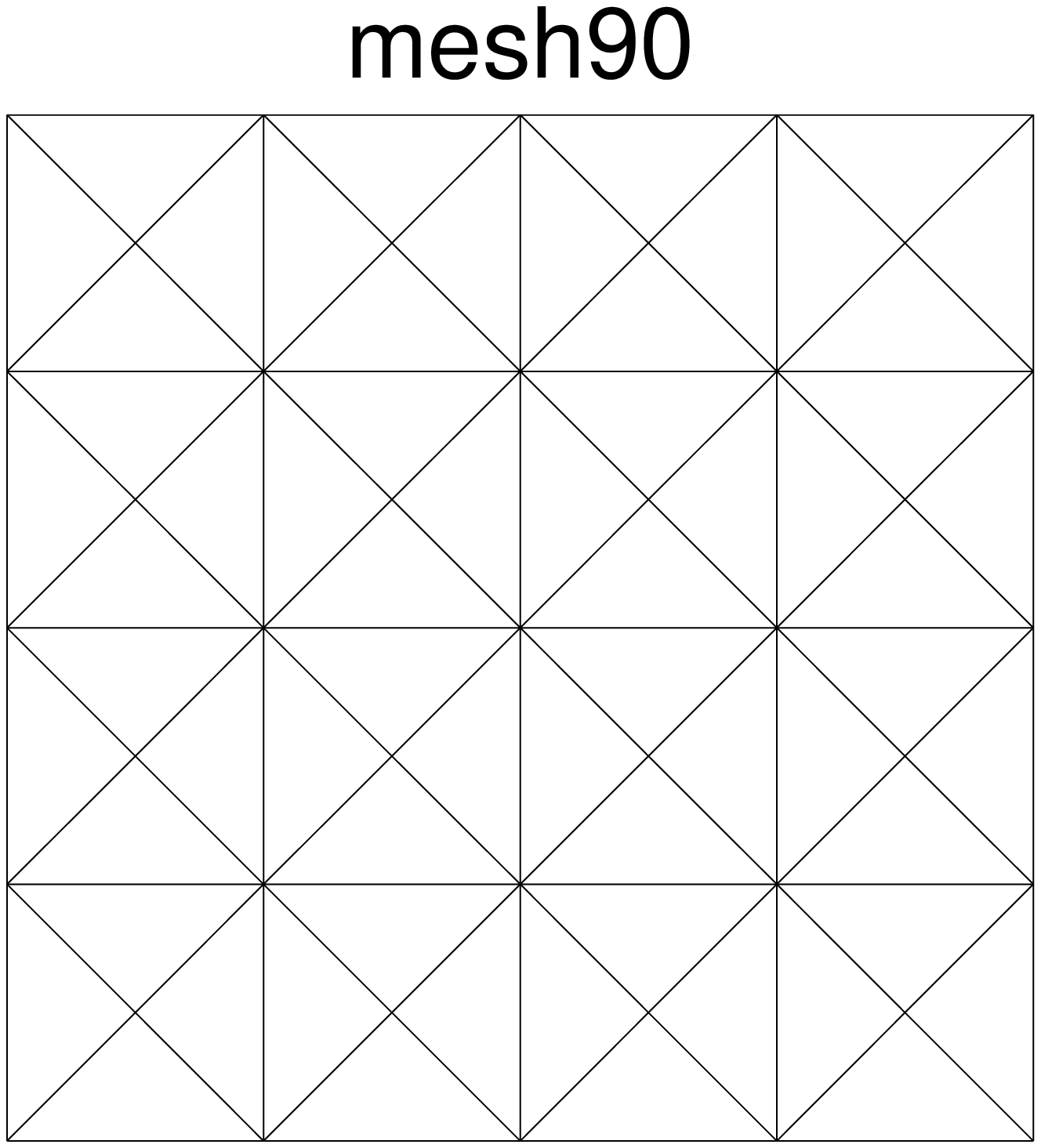} 
  \end{center}
\caption{Three types of mesh are used for Example~\ref{exam5.1}.}
\label{fig:T1meshes}
\end{figure}

\begin{table}
  \begin{center}
    \caption{Maximum and minimum values of the numerical solutions obtained with
    							different meshes for Example~\ref{exam5.1}.} \vspace{5pt}
    \label{tab:T1maxmin}
    \begin{tabular}{|c|c|c|c|c|c|c|c|c|c|c|c|c|}
      \hline
       & \multicolumn{4}{|c|}{\tt mesh45} & \multicolumn{4}{|c|}{\tt mesh135} & \multicolumn{4}{|c|}{\tt mesh90}  \\ \hline
      Size & \multicolumn{2}{|c|}{Max.} & \multicolumn{2}{|c|}{Min.} & \multicolumn{2}{|c|}{Max.} & \multicolumn{2}{|c|}{Min.}
                     & \multicolumn{2}{|c|}{Max.} & \multicolumn{2}{|c|}{Min.} \\ \hline
      & $u_b$ & $u_0$ & $u_b$ & $u_0$ & $u_b$ & $u_0$ & $u_b$ & $u_0$ & $u_b$ & $u_0$
      & $u_b$ & $u_0$  \\ \hline
      $8\times 8$   & $1$ & $1$ & $0$ & $0$ & $1.038$ & $1.019$ & $-5.14E-2$ & $-2.57E-2$ & $1$ & $1$ & $0$ & $0$ \\ \hline
      $16\times 16$ & $1$ & $1$ & $0$ & $0$ & $1.041$ & $1.026$ & $-5.01E-2$ & $-3.20E-2$ & $1$ & $1$ & $0$ & $0$ \\ \hline
      $32\times 32$ & $1$ & $1$ & $0$ & $0$ & $1.035$ & $1.028$ & $-4.05E-2$ & $-3.36E-2$ & $1$ & $1$ & $0$ & $0$ \\ \hline
      $64\times 64$ & $1$ & $1$ & $0$ & $0$ & $1.028$ & $1.027$ & $-3.19E-2$ & $-3.09E-2$ & $1$ & $1$ & $0$ & $0$ \\ \hline
    \end{tabular}
  \end{center}
\end{table}


\begin{exam}
\label{exam5.2}
To test the discrete maximum principle for non-constant diffusion coefficients, we consider an example
in the form (\ref{eq:pde}) with $\Omega = (0,1)\times (0,1)$. Denote by $(r,\theta)$ the polar coordinates
with the pole centered at $(x,y) =(-0.1,0.5)$. The diffusion matrix is defined as
$$
\cA = \begin{bmatrix}\cos(\theta) & \sin(\theta) \\ - \sin(\theta) & \cos(\theta)\end{bmatrix} 
 \begin{bmatrix}k_1 & 0 \\ 0 & k_2\end{bmatrix} 
 \begin{bmatrix}\cos(\theta) & - \sin(\theta) \\ \sin(\theta) & \cos(\theta) \end{bmatrix} ,
$$
where $k_1 = 1$, $k_2 = 1 + \gamma e^{ -200 (r-0.5)^2 }$, and $\gamma$ be a positive parameter. Notice that
$k_2$ is a Gaussian distribution in $r$ and peaks at $r=0.5$ with a maximum value $\gamma+1$. 
The diffusion matrix becomes more anisotropic around $r= 0.5$ for larger $\gamma$.
We choose $f = 0$ and $g = \sin (x+0.5)\pi$.
The maximum principle implies that the exact solution of the BVP stays between $-1$ and $1$.

\begin{figure}
  \begin{center}
    \includegraphics[width=5cm]{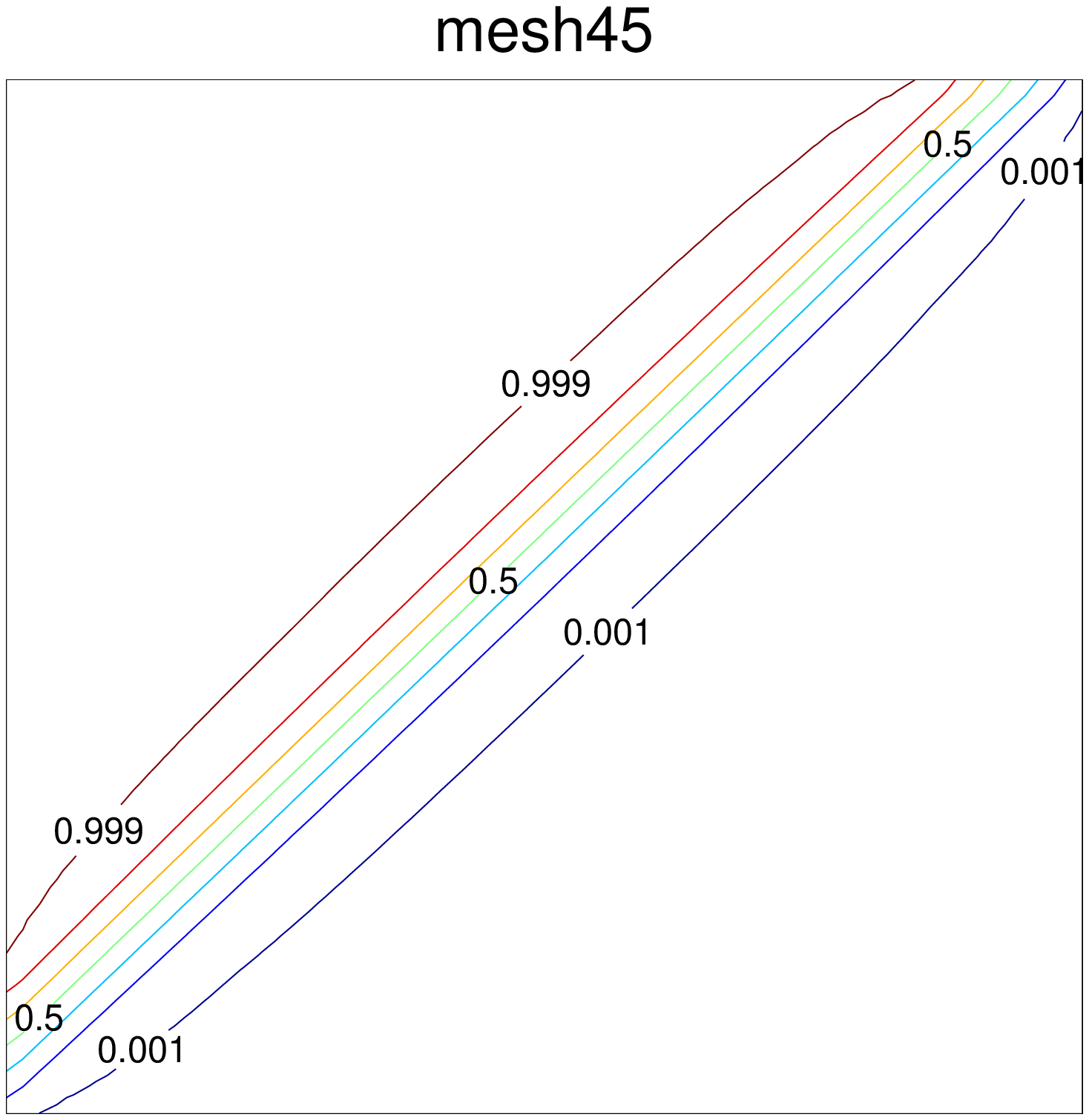} \hspace{-0.2cm}
    \includegraphics[width=5cm]{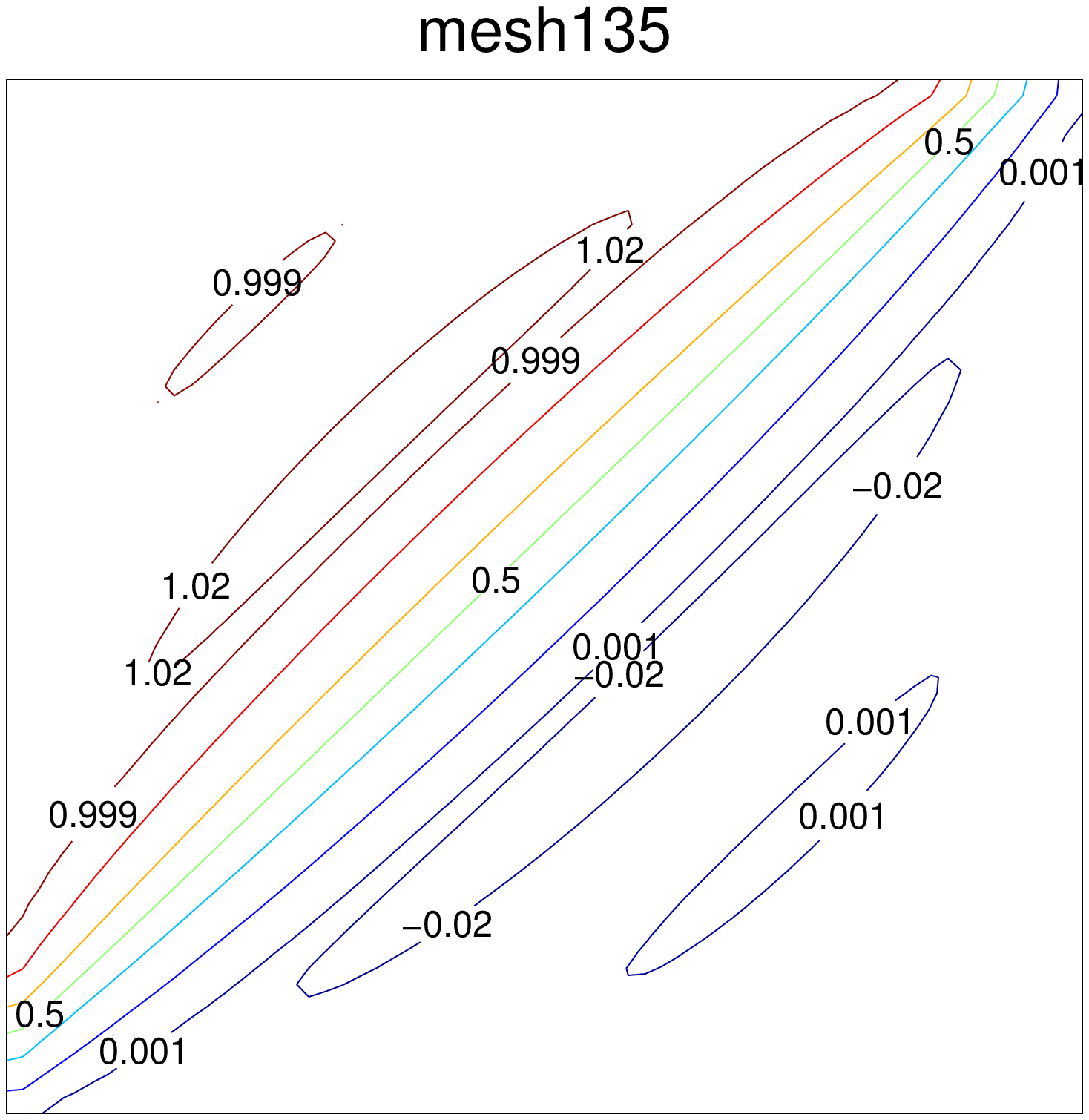} \hspace{-0.2cm}
    \includegraphics[width=5cm]{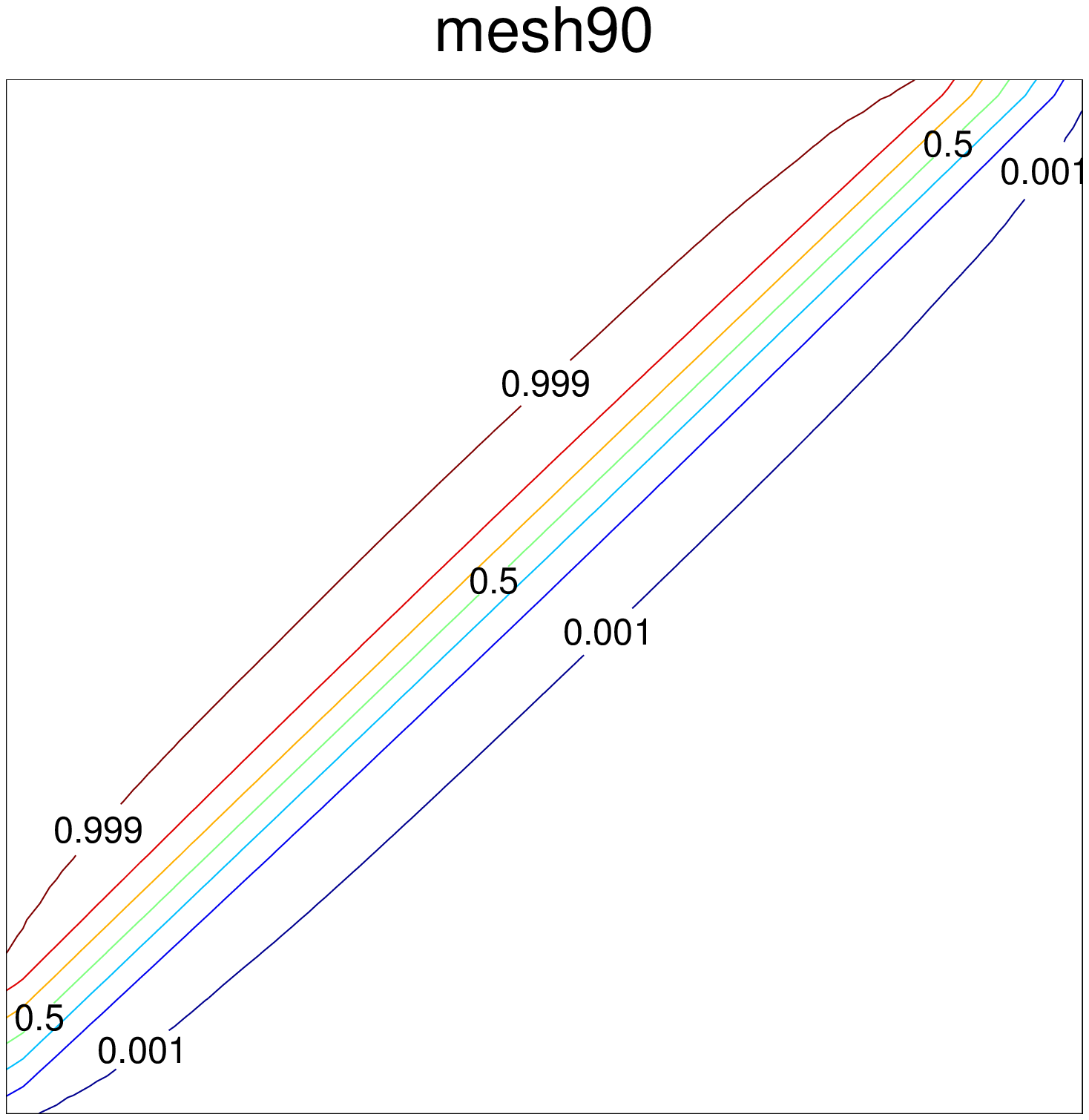} 
  \end{center}
\caption{Contours of the numerical solutions obtained with three types of mesh (with size $64\times 64$)
    for Example~\ref{exam5.1}.}
\label{fig:T1contour}
\end{figure}

We now test this problem on all three meshes in Fig. \ref{fig:T1meshes}. 
Because of symmetry, {\tt mesh45} and {\tt mesh135} give almost identical results up to a reflection across $y=0.5$,
thus we examine here only the maximum and minimum values on {\tt mesh45} and {\tt mesh90}. 
In Table \ref{tab:T2maxmin-mesh45}--\ref{tab:T2maxminu0-mesh90},
the maximum and minimum values of $u_0$ and $u_b$, computed on {\tt mesh45} and {\tt mesh90}
using various values of $\gamma$, are reported. 
Both $u_b$ and $u_0$ have overshoots on some meshes.
Moreover, we have marked the triangles and edges, on which $u_h$ has an overshoot,
in Figs. \ref{fig:T2overshoot-mesh45} and \ref{fig:T2overshoot-mesh90}, for $\gamma=99$ and various mesh sizes.
Behavior for other values of $\gamma$ and mesh sizes are similar and hence omitted.

It is interesting to point out that both {\tt mesh45} and {\tt mesh90} do not satisfy the mesh condition
(\ref{eq:thm:general-1}) for any value of $\gamma$ or any mesh size. To see this,
we first notice that the diffusion matrix $\cA$ has negative off-diagonal entries, $(k_2-k_1)\sin\theta\cos\theta$, 
at all points below the line $y=0.5$ and so does $\cA_K$ at all triangles below this line.
From (\ref{eq:alpha}), one can see that $\cos(\alpha_{i,j,\cA_K^{-1}})$ are negative 
when $e_i$ and $e_j$ are horizontal and vertical edges of those triangles. Thus, 
{\tt mesh45} does not satisfy (\ref{eq:thm:general-1}). For {\tt mesh90}, we consider the four triangles
within an arbitrary square and denote the unit normal of the diagonal lines by $\tilde{\bn}_1$ and $\tilde{\bn}_2$.
We assume that $h$ is sufficiently small so that $\cA_K$ is almost the same on these triangles.
Then, the left-hand side of (\ref{eq:alpha}) takes value $\tilde{\bn}_1^T \cA_K \tilde{\bn}_2$
on two of those triangles and $-\tilde{\bn}_1^T \cA_K \tilde{\bn}_2$ on the other. This means that
$\cos(\alpha_{i,j,\cA_K^{-1}})$ takes negative values on two of the triangles and therefore {\tt mesh90} violates
(\ref{eq:thm:general-1}).

The above analysis explains why the maximum principle is violated for most cases shown in
Tables \ref{tab:T2maxmin-mesh45}--\ref{tab:T2maxminu0-mesh90}. On the other hand, the tables
also show that the magnitudes of the undershoots and overshoots decrease as $h \to 0$,
which is consistent with the fact that the weak Galerkin approximation is convergent
\cite{WG-biharmonic,WangYe_PrepSINUM_2011}. Moreover, one can see from the tables that
the maximum principle is satisfied for some cases even when the mesh condition (\ref{eq:thm:general-1})
is violated. This does not contradict the theoretical analysis since (\ref{eq:thm:general-1}) is
only a sufficient condition.

Another observation from Table \ref{tab:T2maxmin-mesh45}--\ref{tab:T2maxminu0-mesh90} is that
increasing the value of $\gamma$ worsens the violation of the maximum principle. 
This is because the problem becomes more anisotropic when $\gamma$ gets larger.

Next, we solve the Example~\ref{exam5.2} on meshes generated by  
the Delaunay-type triangulator BAMG ({B}idimensional {A}nisotropic {M}esh {G}enerator,
developed by Hecht \cite{Hec97}).
BAMG is designed to generate triangular meshes with a given metric tensor. 
Meshes generated by BAMG with $\cA^{-1}$ as the metric tensor are shown in 
Figs.~\ref{fig:BAMGmesh20} and \ref{fig:BAMGmesh99} for different values of $\gamma$. 
These meshes match well with the diffusion coefficient $\cA$, which becomes more anisotropic
around $r=0.5$ and remains nearly isotropic away from $r=0.5$. By comparing meshes with different
values of $\gamma$, we can see that the triangles become more skewed around $r=0.5$
as $\gamma$ becomes larger.
Numerical experiments show that weak Galerkin solutions on these meshes satisfy the discrete maximum principle
(the results are not shown to save space): all entries of $u_b$ as well as $u_0$
lie between $-1$ and $1$, which agrees with the theoretical prediction given in Theorem \ref{thm:general}.
\qed
\end{exam}

\begin{table}
  \begin{center}
    \caption{Maximum and minimum values of $u_b$ obtained with
    		different $\gamma$ for Example~\ref{exam5.2} on {\tt mesh45}.} \vspace{5pt}
    \label{tab:T2maxmin-mesh45}
    \begin{tabular}{|c|c|c|c|c|c|c|c|c|c|c|c|c|}
      \hline
       & \multicolumn{2}{|c|}{$\gamma=20$}  & \multicolumn{2}{|c|}{$\gamma=40$} & \multicolumn{2}{|c|}{$\gamma=60$} & \multicolumn{2}{|c|}{$\gamma=99$}  \\ \hline
      Size & Max. & Min. & Max. & Min. & Max. & Min. & Max. & Min.  \\ \hline
      $8\times 8$   & $1.02$  & $-1$ & $1.038$ & $-1$ & $1.045$ & $-1$ & $1.051$ & $-1$  \\ \hline
      $16\times 16$ & $1.002$ & $-1$ & $1.012$ & $-1$ & $1.016$ & $-1$ & $1.019$ & $-1$ \\ \hline
      $32\times 32$ & $1$     & $-1$ & $1.002$ & $-1$ & $1.004$ & $-1$ & $1.006$ & $-1$ \\ \hline
      $64\times 64$ & $1$     & $-1$ & $1$     & $-1$ & $1.001$ & $-1$ & $1.002$ & $-1$ \\ \hline
    \end{tabular}
  \end{center}
\end{table}

\begin{table}
  \begin{center}
    \caption{Maximum and minimum values of $u_0$ obtained with
    		different $\gamma$ for Example~\ref{exam5.2} on {\tt mesh45}.} \vspace{5pt}
    \label{tab:T2u0maxmin-mesh45}
    \begin{tabular}{|c|c|c|c|c|c|c|c|c|c|c|c|c|}
      \hline
       & \multicolumn{2}{|c|}{$\gamma=20$}  & \multicolumn{2}{|c|}{$\gamma=40$} & \multicolumn{2}{|c|}{$\gamma=60$} & \multicolumn{2}{|c|}{$\gamma=99$}  \\ \hline
      Size & Max. & Min. & Max. & Min. & Max. & Min. & Max. & Min.  \\ \hline
      $8\times 8$   & $.992$ & $-.971$ & $1.004$& $-.971$ & $1.01$  & $-.971$ & $1.015$ & $-.971$  \\ \hline
      $16\times 16$ & $.996$ & $-.991$ & $.998$ & $-.991$ & $1.001$ & $-.991$ & $1.005$ & $-.991$ \\ \hline
      $32\times 32$ & $.998$ & $-.997$ & $.999$ & $-.997$ & $.999$  & $-.997$ & $1.001$ & $-.997$ \\ \hline
      $64\times 64$ & $.999$ & $-.999$ & $.999$ & $-.999$ & $.999$  & $-.999$ & $1.000$ & $-.999$ \\ \hline
    \end{tabular}
  \end{center}
\end{table}

\begin{table}
  \begin{center}
    \caption{Maximum and minimum values of $u_b$ obtained with
    		different $\gamma$ for Example~\ref{exam5.2} on {\tt mesh90}.} \vspace{5pt}
    \label{tab:T2maxmin-mesh90}
    \begin{tabular}{|c|c|c|c|c|c|c|c|c|c|c|c|c|}
      \hline
       & \multicolumn{2}{|c|}{$\gamma=20$}  & \multicolumn{2}{|c|}{$\gamma=40$} & \multicolumn{2}{|c|}{$\gamma=60$} & \multicolumn{2}{|c|}{$\gamma=99$}  \\ \hline
      Size & Max. & Min. & Max. & Min. & Max. & Min. & Max. & Min.  \\ \hline
      $8\times 8$   & $1.0098$ & $-1$ & $1.006$  & $-1$ & $1.002$ & $-1$ & $1.010$ & $-1$  \\ \hline
      $16\times 16$ & $1.003$  & $-1$ & $1.003$  & $-1$ & $1.002$ & $-1$ & $1.003$ & $-1$ \\ \hline
      $32\times 32$ & $1.0004$ & $-1$ & $1.0005$ & $-1$ & $1.0004$ & $-1$ & $1.0004$ & $-1$ \\ \hline
      $64\times 64$ & $1$      & $-1$ & $1$      & $-1$ & $1$ & $-1$ & $1$ & $-1$ \\ \hline
    \end{tabular}
  \end{center}
\end{table}

\begin{table}
  \begin{center}
    \caption{Maximum and minimum values of $u_0$ obtained with
    		different $\gamma$ for Example~\ref{exam5.2} on {\tt mesh90}.} \vspace{5pt}
    \label{tab:T2maxminu0-mesh90}
    \begin{tabular}{|c|c|c|c|c|c|c|c|c|c|c|c|c|}
      \hline
       & \multicolumn{2}{|c|}{$\gamma=20$}  & \multicolumn{2}{|c|}{$\gamma=40$} & \multicolumn{2}{|c|}{$\gamma=60$} & \multicolumn{2}{|c|}{$\gamma=99$}  \\ \hline
      Size & Max. & Min. & Max. & Min. & Max. & Min. & Max. & Min.  \\ \hline
      $8\times 8$   & $.995$ & $-.981$ & $.997$ & $-.981$ & $.999$ & $-.981$ & $1.007$& $-.981$  \\ \hline
      $16\times 16$ & $.997$ & $-.994$ & $.998$ & $-.994$ & $.999$ & $-.993$ & $.999$ & $-.993$ \\ \hline
      $32\times 32$ & $.999$ & $-.998$ & $.999$ & $-.998$ & $.999$ & $-.998$ & $.999$ & $-.998$ \\ \hline
      $64\times 64$ & $.999$ & $-.999$ & $.999$ & $-.999$ & $.999$ & $-.999$ & $.999$ & $-.999$ \\ \hline
    \end{tabular}
  \end{center}
\end{table}

\begin{figure}
  \begin{center}
    \includegraphics[width=5.4cm]{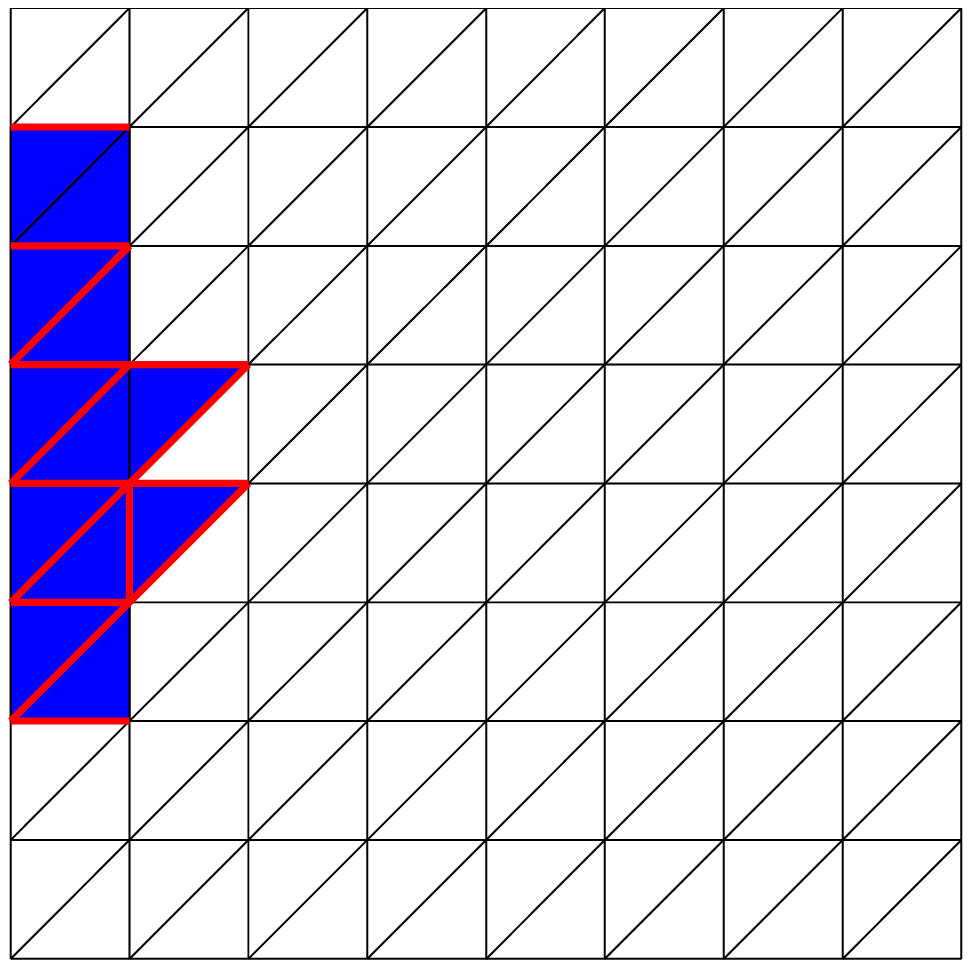} \hspace{-1cm}
    \includegraphics[width=5.4cm]{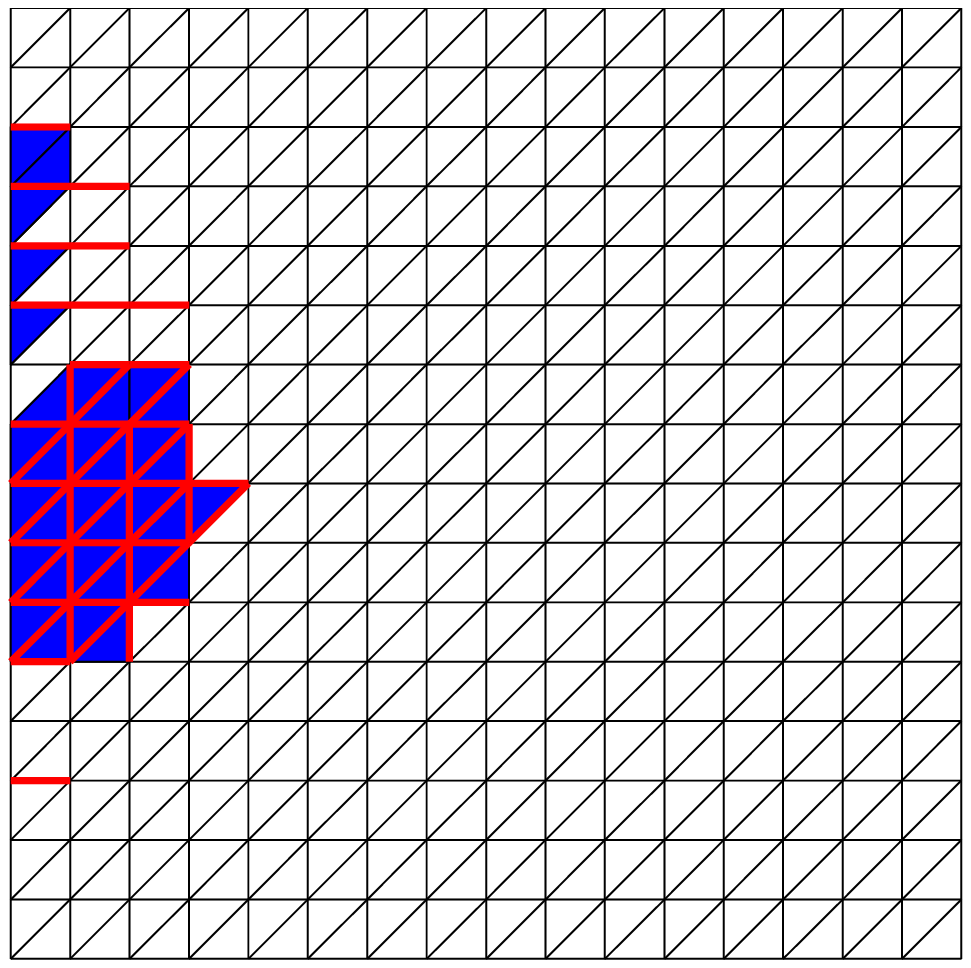} \hspace{-1cm}
    \includegraphics[width=5.4cm]{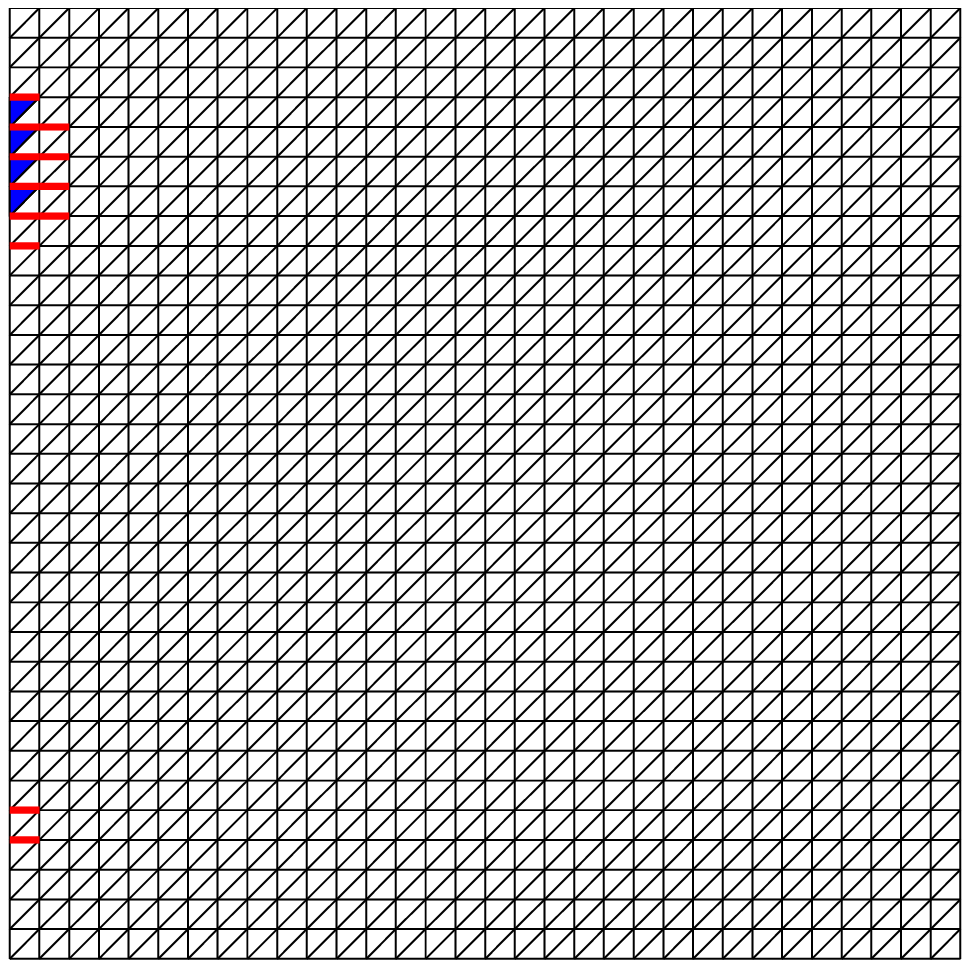}
  \caption{Triangles and edges on which overshoots are observed, using {\tt mesh45} with sizes $8\times 8$, $16\times 16$ and $32\times 32$ for Example~\ref{exam5.2}, with $\gamma=99$.}
  \label{fig:T2overshoot-mesh45}
  \end{center}
\end{figure}

\begin{figure}
  \begin{center}
    \includegraphics[width=5.4cm]{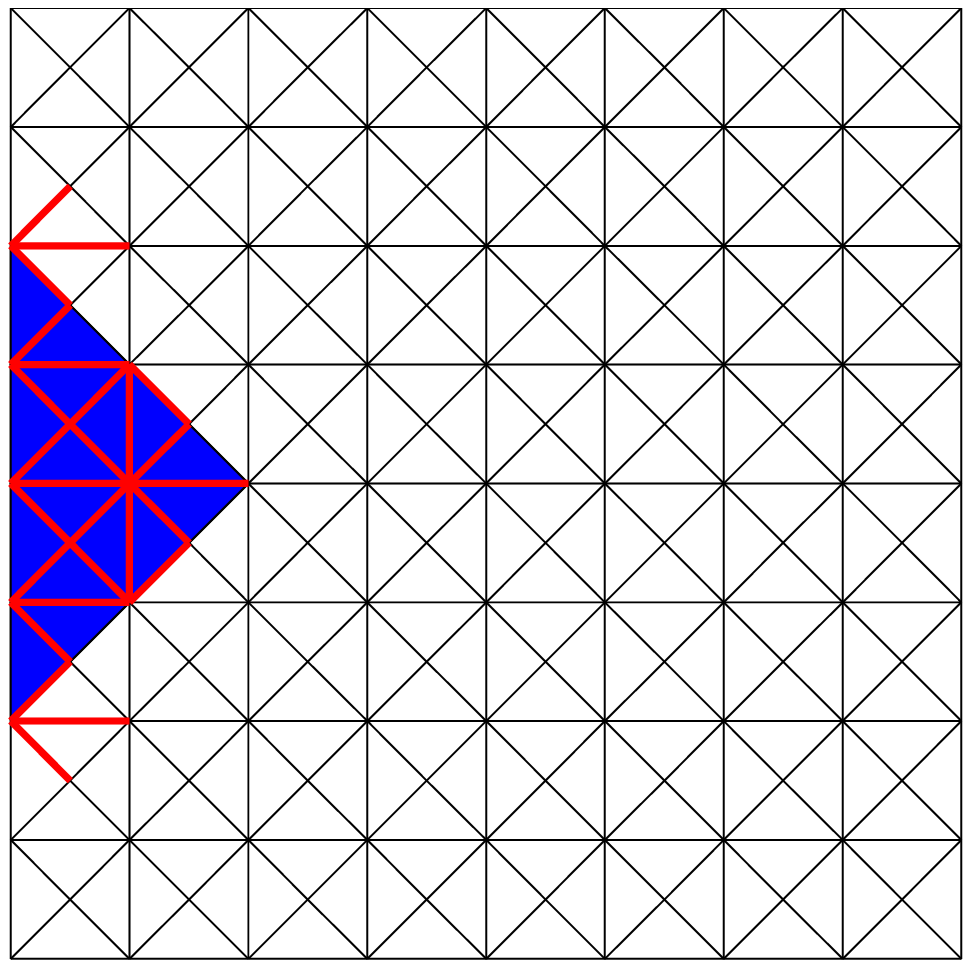} \hspace{-1cm}
    \includegraphics[width=5.4cm]{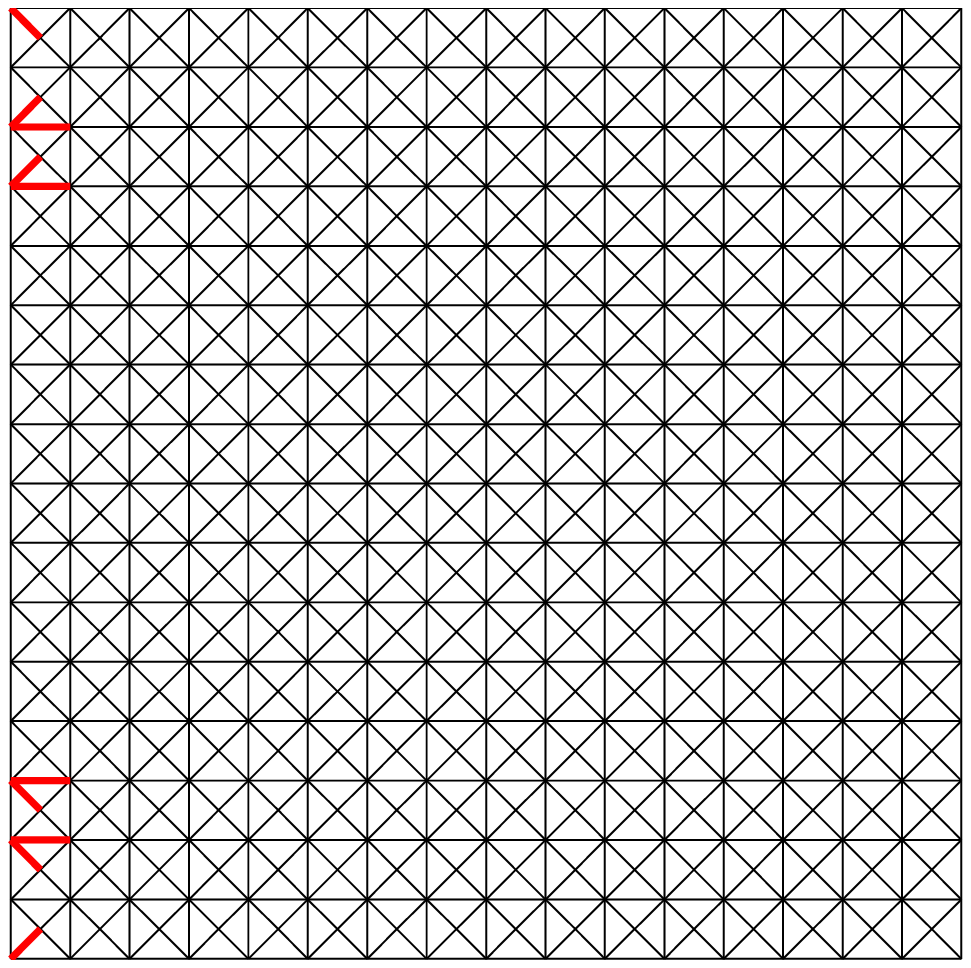} \hspace{-1cm}
    \includegraphics[width=5.4cm]{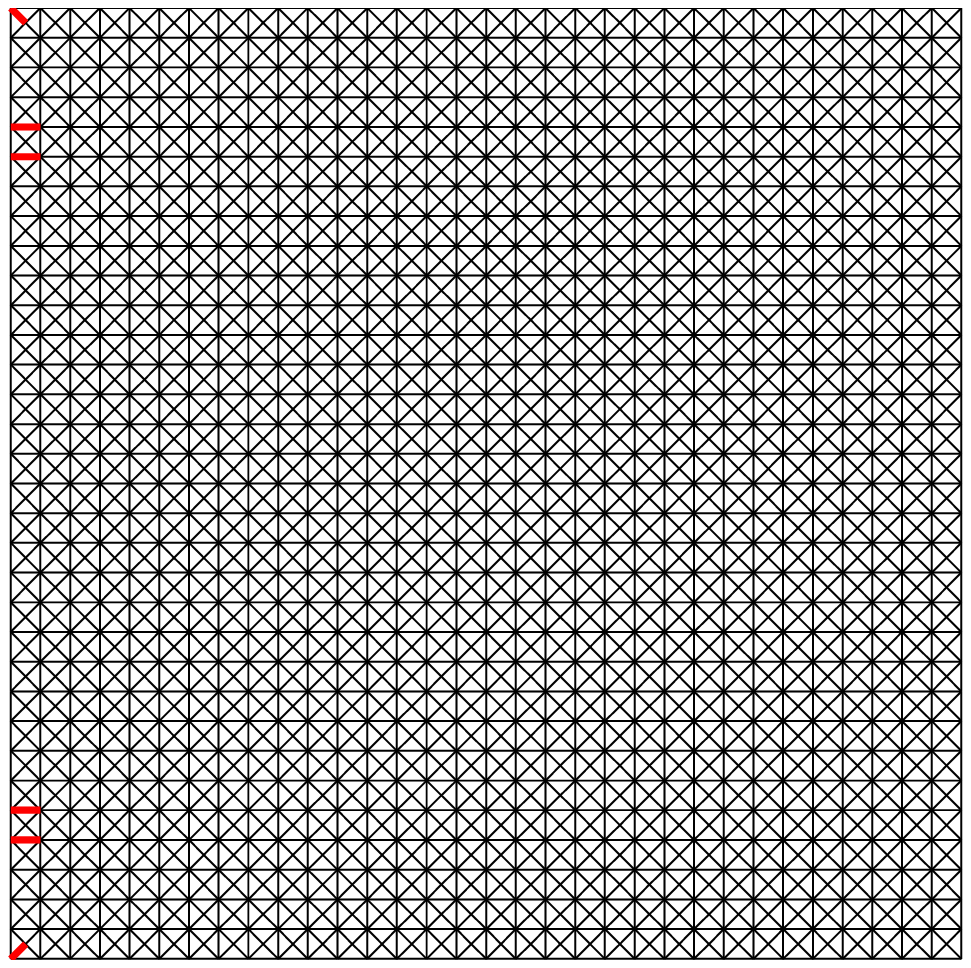}
  \caption{Triangles and edges on which overshoots are observed, using {\tt mesh90} with sizes $8\times 8$, $16\times 16$ and $32\times 32$ for Example~\ref{exam5.2}, with $\gamma=99$.}
  \label{fig:T2overshoot-mesh90}
  \end{center}
\end{figure}

\begin{figure}
  \begin{center}
    \includegraphics[width=5cm]{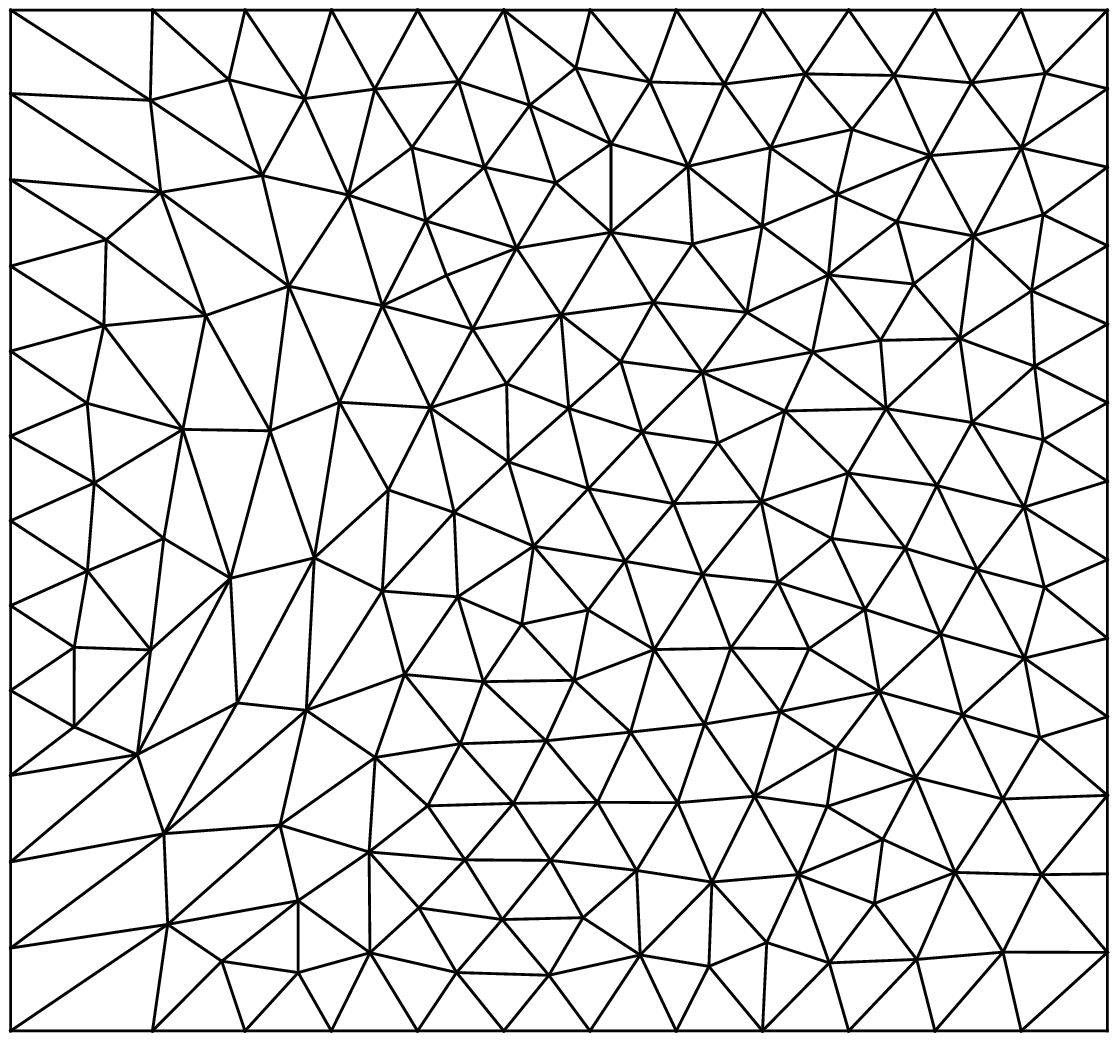} \hspace{-0.2cm}
    \includegraphics[width=5cm]{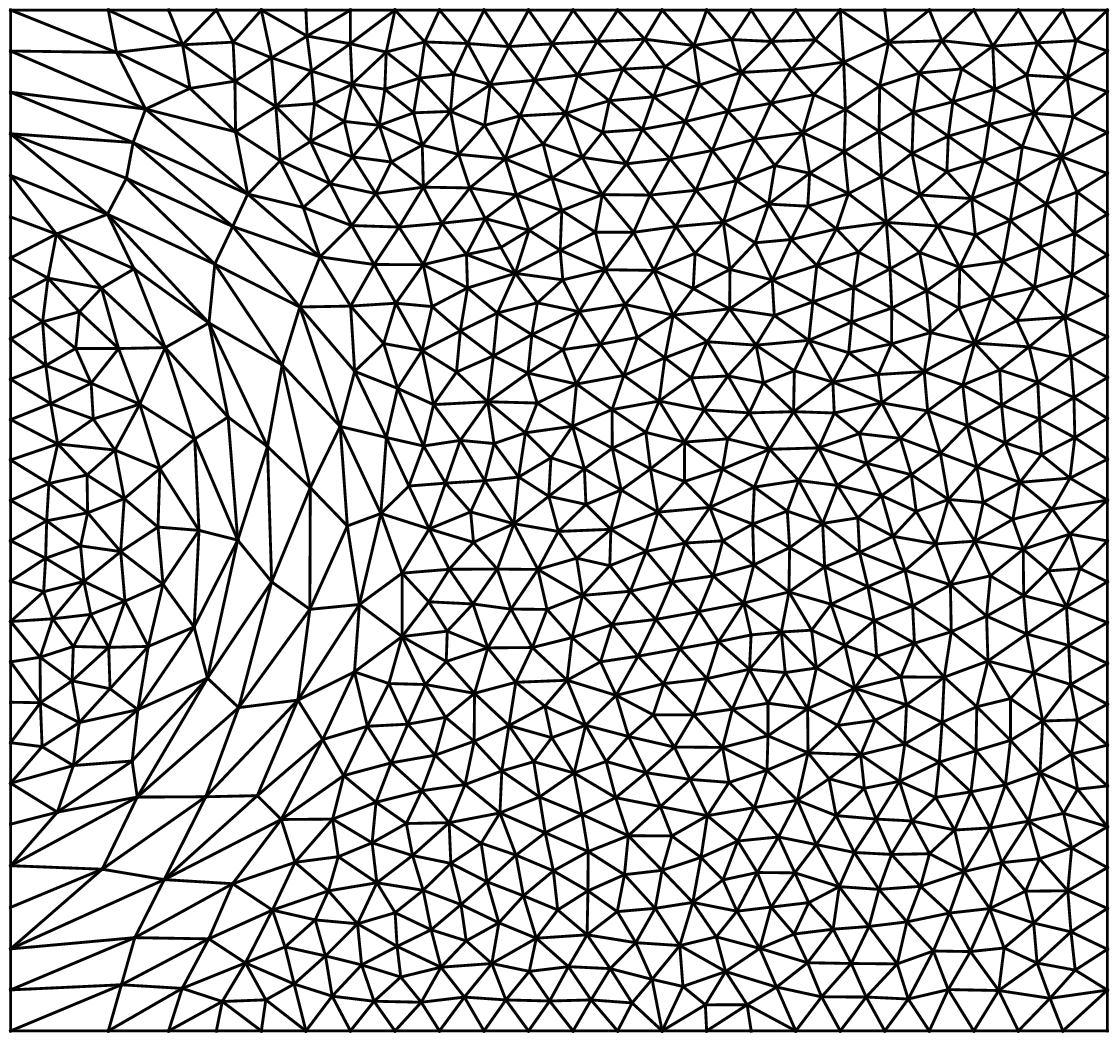}  \hspace{-0.2cm}
    \includegraphics[width=5cm]{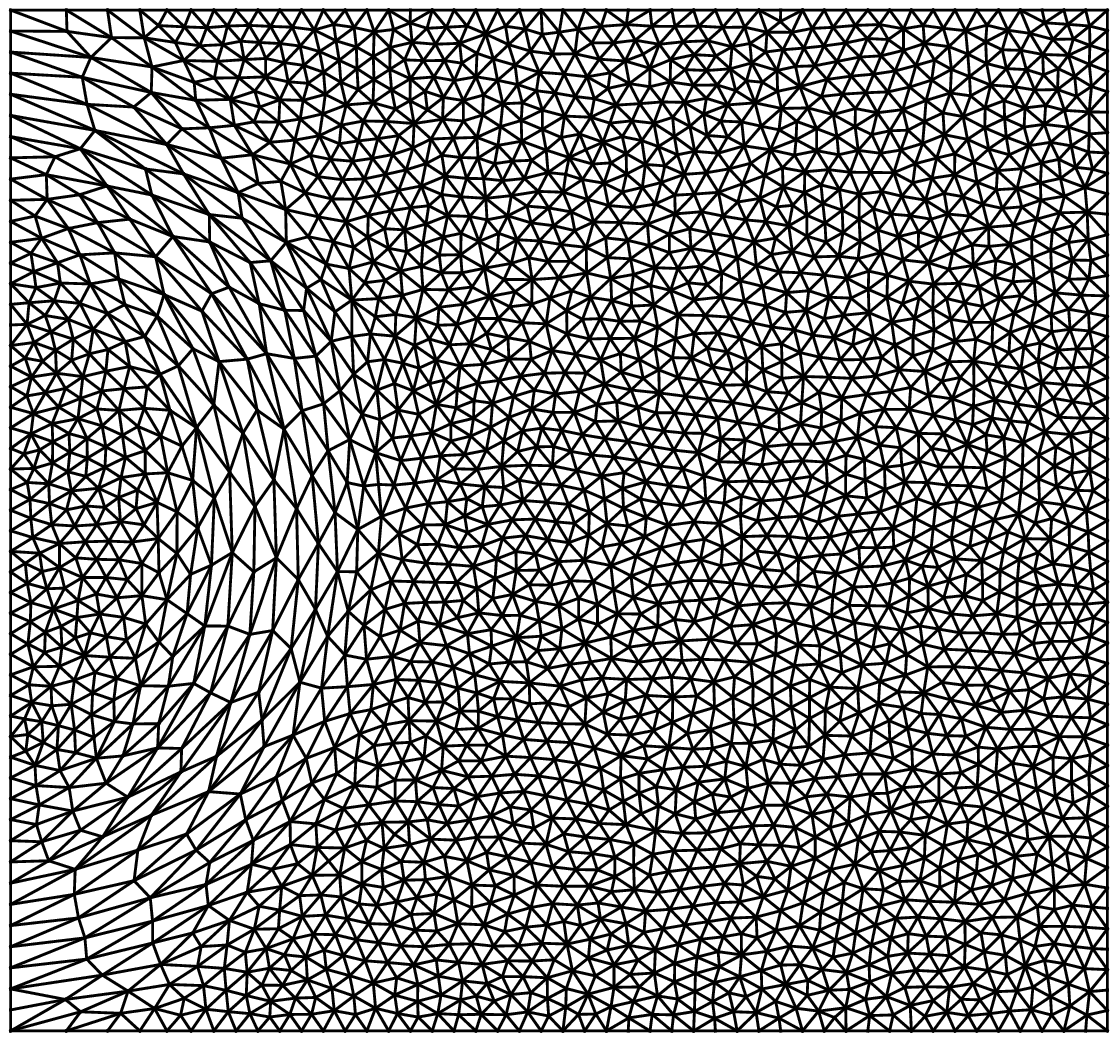}
  \caption{Meshes generated by BAMG with metric $\cA^{-1}$ and $\gamma=20$ for Example~\ref{exam5.2}.}
  \label{fig:BAMGmesh20}
  \end{center}
\end{figure}

%

\begin{figure}
  \begin{center}
    \includegraphics[width=5cm]{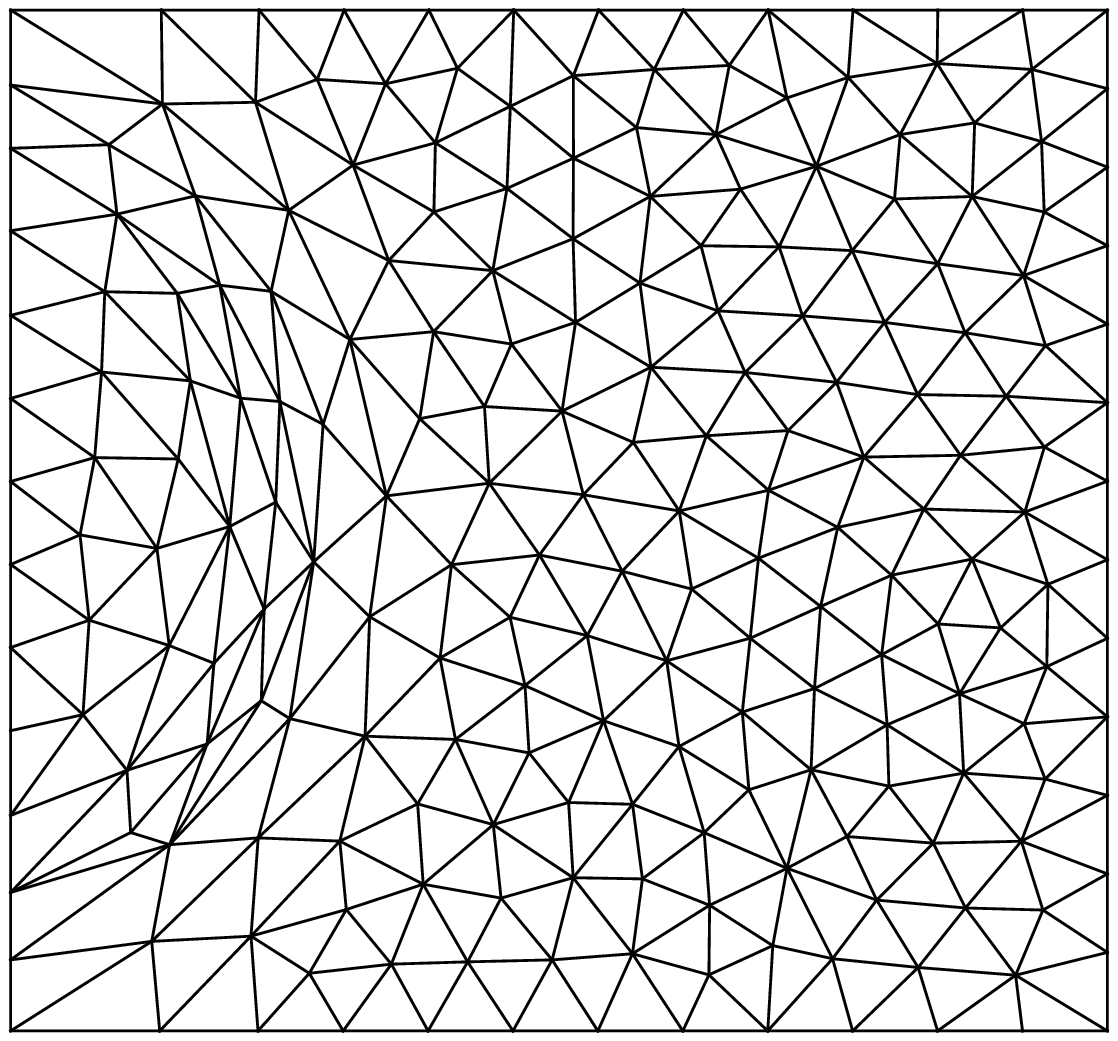} \hspace{-0.2cm}
    \includegraphics[width=5cm]{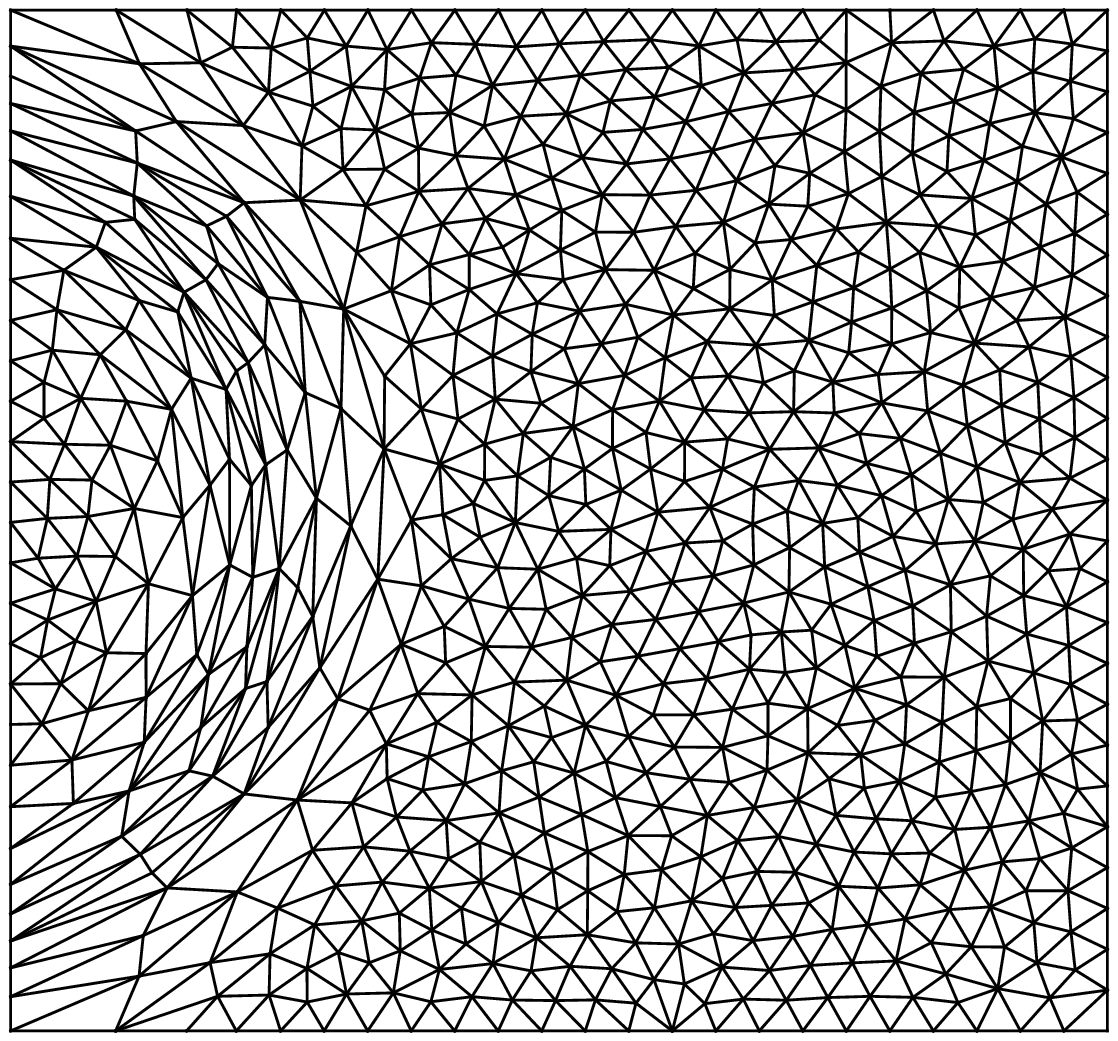} \hspace{-0.2cm}
    \includegraphics[width=5cm]{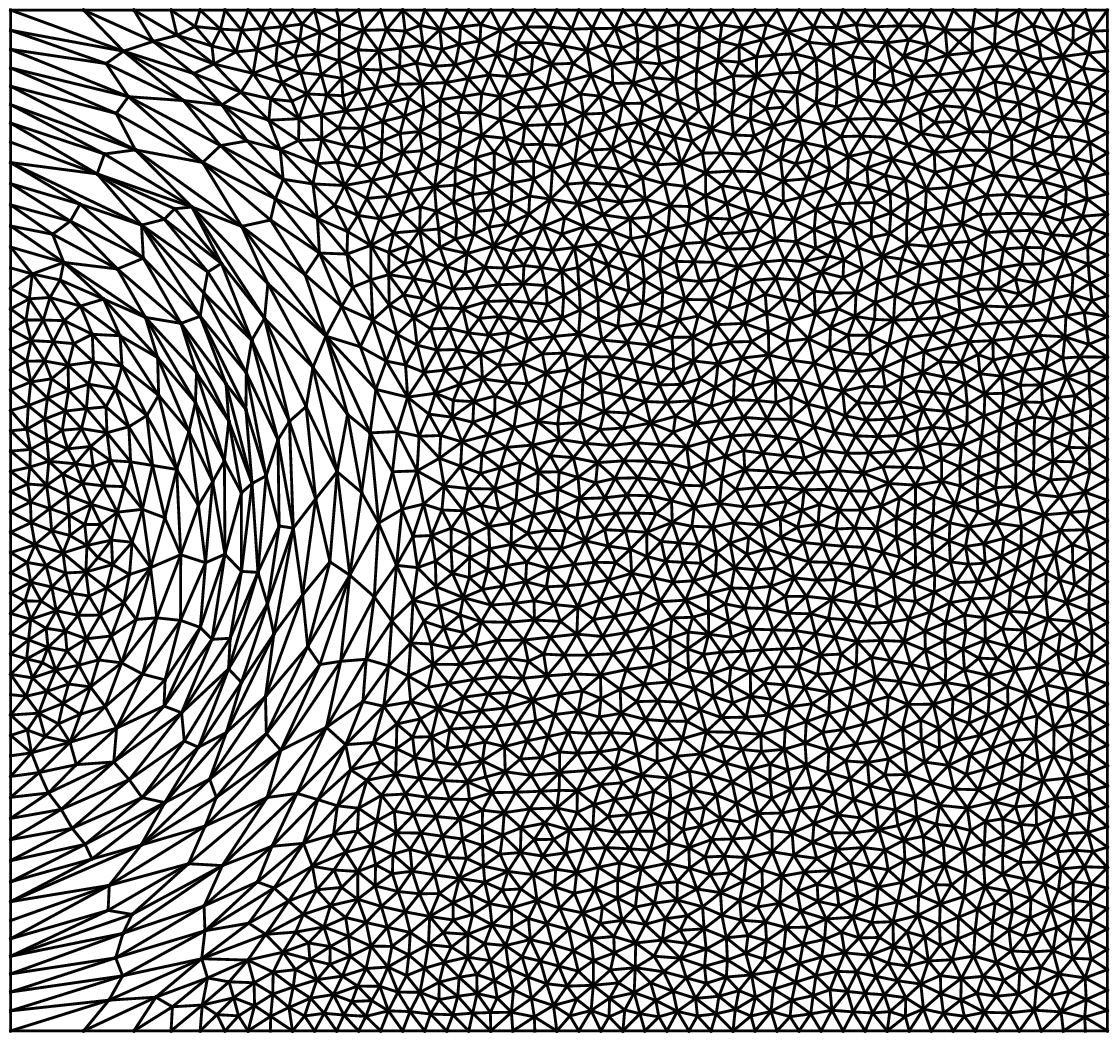}
  \caption{Meshes generated by BAMG with metric $\cA^{-1}$ and $\gamma=99$ for Example~\ref{exam5.2}.}
  \label{fig:BAMGmesh99}
  \end{center}
\end{figure}

\section{Conclusions}
\label{sec:conclusion}

In the previous sections we have studied the discrete maximum principle for a simplest and lowest order
weak Galerkin discretization of the anisotropic diffusion BVP (\ref{eq:pde}). The main results are
stated in Theorems~\ref{thm:pwconstant} and \ref{thm:general}.


Theorem~\ref{thm:pwconstant} states that the weak Galerkin approximation to
BVP (\ref{eq:pde}) satisfies a discrete maximum principle if the diffusion matrix $\cA$
is piecewise constant on the mesh and for any $K \in \T_h$, all of the angles of $K$
are nonobtuse when measured in the metric specified by $\cA^{-1}_K$, where $\cA_K$ is
the average of $\cA$ over $K$.
For the general anisotropic diffusion situation (cf. Theorem~\ref{thm:general}), the mesh is
required to be sufficiently fine, not very skewed (cf. (\ref{eq:thm:general-2})),
and $\mathcal{O}(h^2)$-acute (cf. (\ref{eq:thm:general-1})) when measured
in the metric specified by $\cA_K^{-1}$.  
These conditions are comparable to the mesh conditions for P1 conforming finite elements
in three and higher dimensions but stronger in two dimensions where
a Delaunay-type condition is sufficient to guarantee a P1 conforming FE
approximation to satisfy a discrete maximum principle.

Finally, it is worth pointing out that although the analysis has been carried out
in this work in two dimensions, it applies to three and higher dimensions without major modifications.

\vspace{20pt}

\noindent
{\bf Acknowledgment.}
This work was supported in part by the NSF under Grant DMS-1115118.


\end{document}